\numberwithin{equation}{section}
\newtheorem{thm}{Theorem}[section]
  \theoremstyle{plain}
  \newtheorem{lem}[thm]{Lemma}
  \theoremstyle{plain}
  \newtheorem{prop}[thm]{Proposition}
  \theoremstyle{plain}
  \theoremstyle{remark}
  \newtheorem{rem}[thm]{Remark}
\newcommand{\N}{{\mathbb N}}
\newcommand{\R}{{\mathbb R}}
\newcommand{\eps}{\varepsilon}
\renewcommand{\le}{\leqslant}
\renewcommand{\ge}{\geqslant}
\renewcommand{\leq}{\leqslant}
\renewcommand{\geq}{\geqslant}
\def\Xint#1{\mathchoice
{\XXint\displaystyle\textstyle{#1}}%
{\XXint\textstyle\scriptstyle{#1}}%
{\XXint\scriptstyle\scriptscriptstyle{#1}}%
{\XXint\scriptscriptstyle\scriptscriptstyle{#1}}%
\!\int}
\def\XXint#1#2#3{{\setbox0=\hbox{$#1{#2#3}{\int}$ }
\vcenter{\hbox{$#2#3$ }}\kern-.57\wd0}}
\def\intmed{\Xint-}
\DeclareMathOperator*{\essliminf}{ess\, liminf}
\DeclareMathOperator*{\esssup}{ess\, sup}
\DeclareMathOperator*{\essinf}{ess\, inf}
\title{$H^s$ versus $C^0$-weighted minimizers}
\author[A.\ Iannizzotto]{Antonio Iannizzotto}
\author[S.\ Mosconi]{Sunra Mosconi}
\author[M.\ Squassina]{Marco Squassina}
\address{Dipartimento di Informatica
\newline\indent
Universit\`a degli Studi di Verona
\newline\indent
Strada Le Grazie I-37134 Verona, Italy}
\email{marco.squassina@univr.it}
\email{antonio.iannizzotto@univr.it}
\address{Dipartimento di Matematica e Informatica
\newline\indent
Universit\`a degli Studi di Catania
\newline\indent
Viale A. Doria 6 I-95125 Catania, Italy}
\email{mosconi@dmi.unict.it}
\thanks{The first and second authors were supported by GNAMPA project: ``{\em Problemi al contorno per operatori non locali non lineari}''. The third author was supported by MIUR project: ``{\em Variational and topological methods in the study of nonlinear phenomena}''. This work was partially carried out during a stay of S.\ Mosconi at the University of Verona. He would like to express his gratitude to the Department of Computer Science for the warm hospitality.}
\subjclass[2000]{35P15, 35P30, 35R11}
\keywords{Fractional Laplacian, fractional Sobolev spaces, local minimizers.}
\begin{document}

\begin{abstract}
We study a class of semi-linear problems involving the fractional Laplacian under subcritical or critical growth assumptions. We prove that, for the corresponding functional, local minimizers with respect to a $C^0$-topology weighted with a suitable power of the distance from the boundary are actually local minimizers in the natural $H^s$-topology.
\end{abstract}

\maketitle


\section{Introduction and main result}

\noindent
Let $\Omega$ be a bounded domain in $\R^N$, $N \ge 2$, with $C^{1,1}$ boundary $\partial\Omega$, and $s \in (0,1)$. 
We consider the following boundary value problem driven by the fractional Laplacian operator
\begin{equation} \label{bvp}
\begin{cases}
(- \Delta)^s\, u = f(x,u) & \text{in } \Omega \\
u = 0 & \text{in } \R^N \setminus \Omega.
\end{cases}
\end{equation}
The fractional Laplacian operator is defined by
\[
(-\Delta)^su(x):=C(N,s)\lim_{\eps\searrow 0}\int_{\R^N\setminus B_\eps(x)}\frac{u(x)-u(y)}{|x-y|^{N+2s}}\,dy, \quad\,\, x\in\R^N,
\]
where $C(N,s)$ is a suitable positive normalization constant.\ The nonlinearity $f:\Omega\times\R\to\R$ is a Carath\'eodory mapping which satisfies the growth condition
\begin{equation}\label{gc}
|f(x,t)|\le a(1+|t|^{q-1}) \,\,\, \mbox{a.e.\ in $\Omega$ and for all $t\in\R$ \,\,\, ($a>0$, $1\le q\le 2^*_s$)}
\end{equation}
(here $2^*_s:=2N/(N-2s)$ is the fractional critical exponent). Condition \eqref{gc} is referred to as a {\em subcritical} or {\em critical} growth if $q<2^*_s$ or $q=2^*_s$, respectively.
\vskip2pt
\noindent
For existence and multiplicity results for problem \eqref{bvp} via variational methods, see \cite{SV, SV1, SV2, SV3}. 
Concerning regularity and non-existence of solutions, we refer the reader to \cite{CS,RS,RS1,RS2,CS1,CS2} and to the references therein. 
Although the fractional Laplacian operator $(-\Delta)^s$, and more generally pseudodifferential operators, have been a classical topic of functional analysis since long ago,
the interest for such operator has constantly increased in the last few years.
Nonlocal operators such as $(-\Delta)^s$ naturally arise
in continuum mechanics, phase transition phenomena,
population dynamics and game theory, as they are the typical outcome of stochastical stabilization of L\'evy processes, see e.g.\ 
the work of Caffarelli \cite{C} and the references therein. 
\vskip2pt
\noindent
Problem \eqref{bvp} admits a variational formulation. For any measurable function $u:\R^N\to\R$ we define the Gagliardo seminorm by setting
\[
[u]_{s}^2:=\int_{\R^{2N}}\frac{(u(x)-u(y))^2}{|x-y|^{N+2s}}\, dx\, dy,
\]
and we introduce the fractional Sobolev space
\[
H^s(\R^N)=\{u\in L^2(\R^N):\,[u]_{s}<\infty\},
\]
which is a Hilbert space. We also define a closed subspace
\begin{equation}
\label{defX}
X(\Omega)=\{u\in H^s(\R^N):\,u=0\,\,\,\mbox{a.e. in $\R^N\setminus\Omega$}\}.
\end{equation}
Due to the fractional Sobolev inequality, $X(\Omega)$ is a Hilbert space with inner product
\begin{equation}
\label{innprod}
\langle u,v\rangle_X=\int_{\R^{2N}}\frac{(u(x)-u(y))(v(x)-v(y))}{|x-y|^{N+2s}}\, dx\, dy,
\end{equation}
which induces a norm $\|\cdot\|_X=[\,\cdot\,]_{s}$. 
Set for all $u\in X(\Omega)$
\[
\Phi(u):=\frac{\|u\|_X^2}{2}-\int_\Omega F(x,u)\, dx,
\]
where
\[
F(x,t)=\int_0^t f(x,\tau)\,d\tau,\quad x\in\Omega,\,\, t\in{\mathbb R}.
\]
Then, $\Phi\in C^1(X(\Omega))$ and all its critical points are (up to a normalization constant depending on $s$ and $N$, which we will neglect henceforth)
weak solutions of \eqref{bvp}, namely they satisfy 
\begin{equation}\label{weak}
\langle u,v\rangle_X=\int_\Omega f(x,u)v\, dx, \qquad \text{for all $v\in X(\Omega)$}.
\end{equation}
In the framework of variational methods, local minimizers of the energy $\Phi$ play a fundamental r\^ole. In a number of situations, one singles out particular solutions arising as constrained minimizers of the energy functional in order-defined subsets of $X(\Omega)$. Since usually the latters have empty interior, it is a nontrivial task to prove that such constrained minimizers are actually unconstrained local minimizers of the energy in the whole $X(\Omega)$. 
\vskip2pt
\noindent
This issue was analyzed by Brezis \& Nirenberg \cite{BN} for the semilinear problem
\begin{equation}
\label{probsem}
\begin{cases}
-\Delta u=f(x,u) & \text{in $\Omega$} \\
u=0 & \text{on $\partial\Omega$}.
\end{cases}
\end{equation}
They observe that the $C^1(\overline\Omega)$ topology gives rise to nonempty interiors for most of such order-defined subsets. By the Hopf lemma, constrained minimizers solutions can be seen to lie in the $C^1$-interior of the constraint set. The key point which they proved is that local minima with respect to the $C^1$-topology remain so in the $H^1$-one, despite the latter being much weaker than the former. Thus the constrained minimization procedure gives rise to solutions {\em which are also unconstrained local minimizers}.
\noindent
This method was not only fruitfully applied to obtain a huge number of multiplicity results for the semilinear problem \eqref{probsem}, but also extended to cover a wide range of variational equations.
\vskip2pt
\noindent
In the present paper, we aim to develop all the tools needed to reproduce this technique in the fractional setting. 
In doing so we will gather a number of more or less known results for the fractional Laplacian, including weak and strong maximum principles, a Hopf lemma, and {\em a priori} estimates for the weak solution of problems of the type \eqref{bvp}. We will provide a proof for those results for which only a statement was available, or strengthen the conclusions with respect to existing literature. In some cases, we will generalize results known only for special cases such as linear problems, eigenvalue problems, or positive solutions. Detailed discussion will be made for each result. 
We will then prove that being a local minimizer for $\Phi$ with respect to a suitable weighted $C^0$-norm, is equivalent to being an $X(\Omega)$-local minimizer. Particular attention will be paid to the {\em critical case}, i.e., $q=2^*_s$ in \eqref{gc}, which presents a twofold difficulty: a loss of compactness which prevents minimization of $\Phi$, and the lack of uniform {\em a priori} estimates for the weak solutions of \eqref{bvp}. Finally we will give three different applications of this result to nonlocal semilinear problem. A sub-supersolution principle for local minimizers, a multiplicity result for singular nonlinearities, and a multiplicity result for smooth ones.
\vskip2pt
\noindent
In order to state the local minimization result, we now describe the natural topology corresponding the $C^1$-one above. Define $\delta:\overline\Omega\to\R_+$ by 
\begin{equation}
\label{defdelta}
\delta(x):={\rm dist}(x,\R^N\setminus\Omega),\quad x\in\overline\Omega,
\end{equation}
and consider problem \eqref{bvp} with $f(x, u)=f(x)$ and $f\in L^\infty(\Omega)$.
Ros Oton \& Serra in \cite{RS} proved that a solution $u$ to \eqref{bvp} is such that $u/\delta^s\in C^{\alpha}(\overline\Omega)$.
Thus, a natural topology for the fractional problem \eqref{bvp} seems to be the one of 
\[
C^0_\delta(\overline\Omega)=\Big\{u\in C^0(\overline{\Omega}): \,\mbox{$\displaystyle\frac{u}{\delta^s}$ admits a continuous extension to $\overline{\Omega}$}\Big\}
\]
with norm $\| u\|_{0, \delta}=\|u/\delta^s\|_\infty$. 
Our main result establishes that indeed local minimizers of $\Phi$ in $C^0_\delta(\overline\Omega)$ and in $X(\Omega)$ coincide:

\begin{thm}
\label{min-local}
Let $\Omega$ be a bounded $C^{1,1}$ domain, $f:\Omega\times\R\to\R$ a Carath\'eodory function satisfying \eqref{gc}, and $u_0\in X(\Omega)$. Then,
the following assertions are equivalent:
\vskip2pt
\noindent
\ $(i)$\ \  there exists $\rho>0$ such that
$\Phi(u_0+v)\ge\Phi(u_0)$
for all $v\in X(\Omega)\cap C^0_\delta(\overline\Omega)$, $\|v\|_{0,\delta}\le\rho$,
\vskip2pt
\noindent
\ $(ii)$\    there exists $\eps>0$ such that
$\Phi(u_0+v)\ge\Phi(u_0)$
for all $v\in X(\Omega)$, $\|v\|_X\le\eps$.
\end{thm}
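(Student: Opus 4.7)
My plan is to adapt the Brezis--Nirenberg contradiction strategy to the fractional setting, leveraging the regularity toolkit (maximum principles, Hopf lemma, a priori $L^\infty$-bounds, and the Ros Oton--Serra boundary regularity $u/\delta^s\in C^\alpha(\overline\Omega)$) collected earlier in the paper.

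I would first dispose of the implication $(ii)\Rightarrow(i)$. Assume $(ii)$ and suppose, for contradiction, that there exists $v_n\in X(\Omega)\cap C^0_\delta(\overline\Omega)$ with $\|v_n\|_{0,\delta}\to 0$ and $\Phi(u_0+v_n)<\Phi(u_0)$. Since $\delta^s$ is bounded on $\overline\Omega$, this forces $v_n\to 0$ uniformly. Because $(ii)$ implies $u_0$ is a critical point of $\Phi$, the identity
\[
\Phi(u_0+v_n)-\Phi(u_0)=\tfrac12\|v_n\|_X^2-\int_\Omega\bigl[F(x,u_0+v_n)-F(x,u_0)-f(x,u_0)v_n\bigr]\,dx,
\]
combined with \eqref{gc}, the $L^\infty$-smallness of $v_n$, and the membership $u_0\in L^{2^*_s}(\Omega)$, shows the right-hand integral is $o(1)$. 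Hence $\|v_n\|_X\to 0$, contradicting $(ii)$.

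The main implication is $(i)\Rightarrow(ii)$, which I would argue by contradiction. If $(ii)$ fails then, for every small $\eps>0$,
\[
m_\eps:=\inf\bigl\{\Phi(u):u\in X(\Omega),\ \|u-u_0\|_X\le\eps\bigr\}<\Phi(u_0).
\]
In the subcritical range $q<2^*_s$, the compact embedding $X(\Omega)\hookrightarrow L^q(\Omega)$ yields a minimizer $u_\eps$, and clearly $u_\eps\to u_0$ in $X(\Omega)$. By the Lagrange multiplier rule, $u_\eps$ satisfies
\[
(-\Delta)^s u_\eps=f(x,u_\eps)-\lambda_\eps(u_\eps-u_0)
\]
for some $\lambda_\eps\ge 0$ which, a direct check shows, stays bounded as $\eps\to 0$. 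The a priori $L^\infty$-estimate from the earlier sections, followed by the Ros Oton--Serra boundary regularity applied to the perturbation, delivers $\|u_\eps-u_0\|_{0,\delta}\to 0$, contradicting $(i)$.

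The real obstacle is the critical case $q=2^*_s$, where $X(\Omega)\hookrightarrow L^{2^*_s}(\Omega)$ is no longer compact (so $m_\eps$ may fail to be attained), and the available $L^\infty$-estimate is not uniform in the $L^{2^*_s}$-norm of the solution. I would circumvent this by truncating $f$ at height $k$ to obtain a subcritical approximating problem, applying the previous argument to extract minimizers $u_{\eps,k}$, then running a Moser-type iteration on the perturbation $u_{\eps,k}-u_0$ whose $X$-smallness localises the critical mass and yields a uniform $L^\infty$-bound. Sending first $k\to\infty$ and then $\eps\to 0$, Ros Oton--Serra regularity once again transfers $L^\infty$-smallness into $C^0_\delta$-smallness, contradicting $(i)$ and closing the argument.
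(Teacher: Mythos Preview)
Your overall strategy is the correct Brezis--Nirenberg scheme, and your argument for $(ii)\Rightarrow(i)$ is valid (the paper argues slightly differently, via $\limsup\|u_n\|_X\le\|u_0\|_X$ and weak-plus-norm convergence, without invoking criticality of $u_0$). However, the sketch of $(i)\Rightarrow(ii)$ contains two concrete gaps.

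\textbf{The Lagrange multiplier equation is wrong.} The constraint is $\|u-u_0\|_X\le\eps$, whose boundary has derivative $v\mapsto\langle u-u_0,v\rangle_X$, not $v\mapsto\int_\Omega(u-u_0)v$. Hence the Euler--Lagrange relation reads
\[
\langle u_\eps,v\rangle_X-\int_\Omega f(x,u_\eps)v\,dx=\mu_\eps\langle u_\eps-u_0,v\rangle_X,
\]
i.e.\ $(1-\mu_\eps)(-\Delta)^s u_\eps+\mu_\eps(-\Delta)^s u_0=f(x,u_\eps)$, and \emph{not} $(-\Delta)^s u_\eps=f(x,u_\eps)-\lambda_\eps(u_\eps-u_0)$. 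The zeroth-order perturbation you wrote would correspond to an $L^2$-ball constraint. With the correct equation you must cope with the term $(-\Delta)^s u_0$; this is where the paper's device of first reducing to $u_0=0$ becomes essential. Under $(i)$, testing against $C^\infty_c(\Omega)$ and using its density in $X(\Omega)$ shows $u_0$ is a critical point; one then sets $\tilde\Phi(v)=\Phi(u_0+v)-\Phi(u_0)$, whose nonlinearity again satisfies \eqref{gc}, and the problem reduces to $u_0=0$. In that case the multiplier equation becomes simply $(-\Delta)^s u_\eps=C_\eps f(x,u_\eps)$ with $C_\eps=(1-\mu_\eps)^{-1}\in(0,1]$ (since $\mu_\eps\le 0$, obtained by testing with $-u_\eps$), and the a priori bound plus Ros Oton--Serra apply directly. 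Without this reduction your route does not close: boundedness of $\lambda_\eps$ alone is neither proved nor sufficient.

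\textbf{The critical case needs a different limiting scheme.} Sending ``first $k\to\infty$ and then $\eps\to 0$'' does not work: for fixed $\eps$ you have no compactness to pass $k\to\infty$. The paper instead picks, for each $\eps_n\to 0$ and each witness $w_n$ with $\Phi(w_n)<0$, a single level $k_n$ with $\Phi_{k_n}(w_n)<0$, and minimises the subcritical $\Phi_{k_n}$ on $\overline B^X_{\eps_n}(0)$. The resulting $u_n$ satisfy $\|u_n\|_{2^*_s}\to 0$; this is precisely what makes the $L^\infty$-estimate uniform, because in the critical version of the a priori bound one may then take $K_0=0$ in the smallness condition \eqref{piccolo}. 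No separate ``Moser iteration on $u_{\eps,k}-u_0$'' is needed; the uniform $L^\infty$-bound for $u_n$ follows directly from the already-proved a priori theorem once $\|u_n\|_{2^*_s}$ is small.
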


\noindent
Notice that, contrary to the result of \cite{BN} in the local case $s=1$, there is no relationship between the topologies 
of $X$ and $C^0_\delta(\overline\Omega)$.
\vskip2pt

\noindent
The paper has the following structure: in Section \ref{s2} we establish some preliminary results, including the weak and strong maximum principles, and a fractional Hopf lemma; in Section \ref{apb} we prove {\em a priori} bounds for non-local problems, both in the subcritical and the critical cases; in Section \ref{s3} we prove Theorem \ref{min-local}; in Section \ref{s4} we give some applications of our main result; and in Section \ref{sec6} we discuss possible extensions and developments.

\begin{rem}\label{barrios}
After completing the present work, we became aware of an interesting paper of Barrios, Colorado, Servadei \& Soria \cite{BCSS}, where a special case of Theorem \ref{min-local} is obtained and used to study fractional boundary value problems involving pure power type nonlinearities with critical growth.
\end{rem}

\section{Preliminary results}\label{s2}

\noindent
In this section we will state and prove some basic results about weak (super)solutions of non-local boundary value problems. 
\vskip2pt
\noindent
For $\delta$ as in \eqref{defdelta}, we define the weighted H\"older-type spaces ($\alpha\in(0,1)$)
\begin{equation}
\label{defcdelta}
\begin{split}
C^0_\delta(\overline\Omega) &:=\Big\{u\in C^0(\overline{\Omega}): \,\mbox{$\displaystyle\frac{u}{\delta^s}$ admits a continuous extension to $\overline{\Omega}$}\Big\}, \\
C_\delta^{0,\alpha}(\overline\Omega)&:=\Big\{u\in C^0(\overline{\Omega}):\,\mbox{$\displaystyle\frac{u}{\delta^s}$ admits a $\alpha$-H\"older continuous extension to $\overline{\Omega}$}\Big\},
\end{split}
\end{equation}
endowed with the norms
\[\|u\|_{0,\delta}:=\Big\|\frac{u}{\delta^s}\Big\|_\infty, \,\,\,\, \|u\|_{\alpha,\delta}:=\|u\|_{0,\delta}+\sup_{x,y\in\overline\Omega,\,x\ne y}\frac{|u(x)/\delta(x)^s-u(y)/\delta(y)^s|}{|x-y|^\alpha},\]
respectively. Clearly, any function $u\in C^0_\delta(\overline\Omega)$ vanishes on $\partial\Omega$, so it can be naturally extended by $0$ on $\R^N\setminus\overline\Omega$. In this way, we will always consider elements of $C^0_\delta(\overline\Omega)$ as defined on the whole $\R^N$. Moreover, by virtue of Ascoli's theorem, the embedding $C_\delta^{0,\alpha}(\overline\Omega)\hookrightarrow C_\delta^{0}(\overline\Omega)$ is compact. 
\vskip2pt
\noindent
The Hilbert space $X(\Omega)$ has been defined in \eqref{defX}, with inner product \eqref{innprod}. The embedding $X(\Omega)\hookrightarrow L^q(\Omega)$ is continuous for all $q\in [1,2^*_s]$ and compact if $q\in[1,2^*_s)$ (see \cite[Theorem 7.1]{DPV}). We will set
\[X(\Omega)_+=\{u\in X(\Omega):\, u\ge 0\,\mbox{a.e. in $\Omega$}\},\]
the definition of $H^s(\R^N)_+$ being analogous. For all $t\in\R$ we set
\[t_\pm=\max\{\pm t,0\}.\]
Besides, for all $x\in\R^N$, $r>0$ we denote by $B_r(x)$ (respectively, $\overline B_r(x)$) the open (respectively, closed) ball of radius $r$ centered at $x$ in $\R^N$. Similarly, $B^X_\rho(u)$, $\overline B^X_\rho(u)$ ($B^\delta_\rho(u)$, $\overline B^\delta_\rho(u)$) will denote an open and a closed ball, respectively, in $X(\Omega)$ (in $C^0_\delta(\overline\Omega)$) centered at $u$ with radius $\rho$. Finally, $C$ will denote a positive constant whose value may change case by case.
\vskip2pt
\noindent
We consider the following linear equation with general Dirichlet condition:
\begin{equation}
\label{dirichlet}
\begin{cases}
(-\Delta)^s u=f &\text{in $\Omega$}\\
u=g&\text{in $\R^N\setminus\Omega$,}
\end{cases}
\end{equation}
where $f\in L^\infty(\Omega)$ and $g\in H^s(\R^N)$. We say that $u\in H^{s}(\R^N)$ is a {\em weak supersolution} of \eqref{dirichlet} if 
$u\geq g$ a.e.\ in $\R^N\setminus \Omega$ and the following holds for all $v\in X(\Omega)_+$:
\[\int_{\R^{2N}}\frac{(u(x)-u(y))(v(x)-v(y))}{|x-y|^{N+2s}} \, dx\, dy \geq \int_\Omega fv\,dx.\]
The definition of a {\em weak subsolution} is analogous. Clearly, $u\in H^s(\R^N)$ is a weak solution of \eqref{dirichlet} if it is both a weak supersolution and a weak subsolution (this definition of a weak solution agrees with \eqref{weak}). These definitions will be used throughout the paper.
\vskip2pt
\noindent
From \cite[proof of Theorem 1.1, Remark 4.2]{DKP} we have the following bound.

\begin{thm}
\label{dkp}
Let $u\in H^s(\R^N)$ be a weak subsolution of \eqref{dirichlet} with $f=0$. Then, there exists a constant $C=C(N, s)$ such that for any $k\in \R$, $x_0\in\Omega$, $r>0$ such that $B_r(x_0)\subseteq\Omega$, we have
\[\esssup_{B_{r/2}(x_0)} u\leq k+{\rm Tail} ((u-k)_+; x_0, r/2)+C\left(\intmed_{B_r(x_0)} (u-k)_+^2 \, dx \right)^{\frac 1 2},\]
where the {\em nonlocal tail} of $v\in H^s(\R^N)$ at $x_0$ is defined by
\[
{\rm Tail} (v; x_0, r):=r^{2s}\int_{\R^N\setminus B_r(x_0)}\frac{|v(x)|}{|x-x_0|^{N+2s}}\, dx.
\]
\end{thm}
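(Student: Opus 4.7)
The plan is to carry out a nonlocal De Giorgi iteration in the spirit of \cite{DKP}. Fix $k$, $x_0$, $r$ as in the statement and a parameter $d > 0$ to be chosen at the end. Introduce the geometric sequences
\[
k_j := k + d(1 - 2^{-j}), \qquad r_j := \frac{r}{2}(1 + 2^{-j}), \qquad j \geq 0,
\]
and set $B_j := B_{r_j}(x_0)$, $w_j := (u - k_j)_+$, and $A_j := \intmed_{B_j} w_j^2 \, dx$. The goal is to show that $A_j \to 0$ for an appropriate choice of $d$, which will give $u \leq k + d$ a.e.\ on $B_{r/2}(x_0)$.

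The core ingredient is a fractional Caccioppoli inequality with tail. Choose smooth cutoffs $\eta_j$ with $\eta_j \equiv 1$ on $B_{j+1}$, ${\rm supp}\,\eta_j \subseteq B_{(r_j + r_{j+1})/2}(x_0)$, and $\|\nabla \eta_j\|_\infty \leq C\,2^j/r$. Testing the weak subsolution inequality with $w_j \eta_j^2$, which lies in $X(\Omega)_+$ since $\eta_j$ is compactly supported in $B_r(x_0) \subseteq \Omega$, and invoking the pointwise algebraic identity
\[
(a - b)\bigl(a_+ \eta^2(x) - b_+ \eta^2(y)\bigr) \geq \bigl(\eta(x) a_+ - \eta(y) b_+\bigr)^2 - (\eta(x) - \eta(y))^2 \max\{a_+, b_+\}^2
\]
with $a = u(x) - k_j$ and $b = u(y) - k_j$, one arrives at
\[
[\eta_j w_j]_s^2 \leq \frac{C\,4^j}{r^{2s}} \int_{B_j} w_j^2 \, dx + C \biggl( \sup_{x \in {\rm supp}\,\eta_j} \int_{\R^N \setminus B_j} \frac{w_j(y)}{|x - y|^{N+2s}} \, dy \biggr) \int_{B_j} w_j \, dx.
\]
The geometric lower bound $|x - y| \geq 2^{-j-2} |y - x_0|$ for $x \in {\rm supp}\,\eta_j$ and $y \notin B_j$, combined with $w_j \leq w_0$ and $\R^N \setminus B_j \subseteq \R^N \setminus B_{r/2}(x_0)$, then shows that the bracketed supremum is dominated by $C \cdot 2^{(N+2s)j} r^{-2s} \, {\rm Tail}(w_0; x_0, r/2)$.

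Combining this with the fractional Sobolev embedding to convert $[\eta_j w_j]_s^2$ into an $L^{2^*_s}$ bound, and estimating $|\{w_{j+1} > 0\} \cap B_{j+1}| \leq 4^{j+1} A_j / d^2$ via Chebyshev, one derives a recursion of the form
\[
A_{j+1} \leq \frac{C\,b^j}{d^{4s/N}} \Bigl( A_j^{1 + 2s/N} + {\rm Tail}(w_0; x_0, r/2)^{4s/N} A_j \Bigr)
\]
for some $b > 1$ depending only on $N$ and $s$. A standard geometric-iteration lemma then forces $A_j \to 0$ provided $A_0$ is small enough relative to $d$, which is arranged by choosing
\[
d = {\rm Tail}(w_0; x_0, r/2) + C \biggl( \intmed_{B_r(x_0)} w_0^2 \, dx \biggr)^{1/2}
\]
with $C$ sufficiently large. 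The principal obstacle lies in the correct extraction of the tail at the Caccioppoli step: the double Gagliardo integral must be split into short- and long-range contributions, and one must verify that the long-range part is controlled by the \emph{initial} tail of $w_0$, rather than by the increasingly small tails of the successive truncations $w_j$, so that the nonlocal contribution does not deteriorate as $j$ grows.
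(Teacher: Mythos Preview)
The paper does not supply its own proof of this theorem: it simply records the estimate as a consequence of \cite[proof of Theorem 1.1, Remark 4.2]{DKP}. Your proposal is a faithful outline of precisely that argument --- the nonlocal De Giorgi iteration of Di Castro--Kuusi--Palatucci, with the Caccioppoli-with-tail inequality, Chebyshev level-set estimate, Sobolev embedding, and geometric convergence lemma --- so there is nothing to compare; you have reconstructed the cited proof rather than offered an alternative.
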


\noindent
The following lemma follows slightly 
modifying the proof of \cite[Lemma 3.2]{RS}:

\begin{lem}\label{subs}
If $0<r<R$, $f=0$, and $g\in H^s(\R^N)$ is such that
\[g(x)=\begin{cases}
1 & \text{if $x\in\overline B_r(0)$} \\
0 & \text{if $x\in\R^N\setminus B_R(0)$,}
\end{cases}\]
then there exist $c=c(r,R)>0$ and a weak solution $\varphi\in H^s(\R^N)$ of \eqref{dirichlet} with $f=0$ in the domain $B_R(0)\setminus\overline B_r(0)$, such that a.e. in $\R^N$
\[\varphi(x)\ge c(R-|x|)_+^s.\]
\end{lem}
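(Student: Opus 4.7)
The plan is to construct $\varphi$ by minimizing the Dirichlet energy among functions with prescribed exterior data, and then to derive the lower bound by comparison with an auxiliary function whose boundary behavior near $\partial B_R(0)$ can be controlled. More precisely, setting $\Omega := B_R(0)\setminus\overline B_r(0)$, I would minimize $I(u) := [u]_s^2/2$ on the closed affine set $g + X(\Omega) \subset H^s(\R^N)$, which is nonempty since $g\in H^s(\R^N)$: the functional is strictly convex, coercive in the $v := u - g$ variable, and weakly lower semicontinuous, so the direct method yields a unique minimizer $\varphi$. Testing the Euler--Lagrange equation against arbitrary $v\in X(\Omega)$ produces the weak formulation $(-\Delta)^s\varphi = 0$ in $\Omega$ of \eqref{dirichlet}. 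Since the exterior data $g$ takes values in $\{0,1\}$ on $\R^N\setminus\Omega$, the truncation $(\varphi\vee 0)\wedge 1$ still lies in $g + X(\Omega)$ and does not increase the Gagliardo seminorm; uniqueness of the minimizer then forces $0\le\varphi\le 1$ a.e.\ in $\R^N$.

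To produce the pointwise lower bound I would compare $\varphi$ with an appropriately rescaled multiple of the auxiliary function $\psi\in X(B_R(0))$ solving
\[
(-\Delta)^s\psi = \chi_{B_r(0)} \text{ in } B_R(0), \qquad \psi = 0 \text{ in } \R^N\setminus B_R(0).
\]
By the weak maximum principle $\psi\ge 0$, and interior regularity makes $\psi$ continuous on the compact set $\overline B_r(0)$. The crucial ingredient, and the actual content of the ``slight modification'' of \cite[Lemma 3.2]{RS} alluded to in the statement, is the barrier estimate $\psi(x)\ge c_1(R-|x|)_+^s$ for some $c_1 = c_1(r,R) > 0$. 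I would extract it from the explicit Green's function of the ball, writing $\psi(x) = \int_{B_r(0)} G_{B_R}(x,y)\,dy$ and using the classical boundary estimate $G_{B_R}(x,y)\ge c_0(R-|x|)^s$ valid uniformly for $y$ in the compact set $\overline B_r(0)\subset B_R(0)$. Alternatively, the same bound can be extracted from Getoor's explicit function $u_0(x) = \kappa_{N,s}^{-1}(R^2-|x|^2)_+^s$, which satisfies $(-\Delta)^s u_0 = 1$ in $B_R(0)$ and vanishes outside and satisfies $u_0(x) \geq \kappa_{N,s}^{-1}R^s(R-|x|)_+^s$.

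To conclude, set $\alpha := 1/\max_{\overline B_r(0)}\psi > 0$. Then $\alpha\psi\le 1 = \varphi$ on $\overline B_r(0)$ and $\alpha\psi = 0 = \varphi$ on $\R^N\setminus B_R(0)$, while in $\Omega$ both $\alpha\psi$ and $\varphi$ weakly solve $(-\Delta)^s w = 0$, since $\overline B_r(0)\cap\Omega = \emptyset$ makes $\chi_{B_r(0)}$ vanish in $\Omega$. The weak maximum principle of Section \ref{s2}, applied to $\alpha\psi - \varphi$, then yields $\alpha\psi\le\varphi$ on all of $\R^N$, whence $\varphi(x)\ge \alpha c_1(R-|x|)_+^s =: c(R-|x|)_+^s$ as claimed. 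The main technical obstacle is the barrier estimate on $\psi$; every other step only requires the direct method and the weak maximum principle, both already in place in Section \ref{s2}.
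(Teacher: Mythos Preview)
Your argument is essentially correct, and since the paper itself provides no proof (it merely cites \cite[Lemma 3.2]{RS}), your construction via the auxiliary Poisson problem $(-\Delta)^s\psi=\chi_{B_r}$ in $B_R$ and comparison is a perfectly legitimate way to fill in the details. The existence of $\varphi$ by the direct method, the truncation bound $0\le\varphi\le1$, the observation that $(-\Delta)^s\psi=0$ in the annulus $\Omega$, and the final comparison step are all sound. One cosmetic point: to invoke Theorem~\ref{wmp} as stated you should apply it to $\varphi-\alpha\psi$ (which is a weak supersolution with nonnegative exterior data), not to $\alpha\psi-\varphi$.

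There is, however, a genuine slip in your ``Getoor alternative.'' The torsion function $u_0(x)=\kappa_{N,s}^{-1}(R^2-|x|^2)_+^s$ satisfies $(-\Delta)^s u_0=1\ge\chi_{B_r}=(-\Delta)^s\psi$ in $B_R$ with the same exterior data, so comparison yields $u_0\ge\psi$, i.e.\ an \emph{upper} bound on $\psi$, not the lower bound you need. Nor can $u_0$ replace $\psi$ directly in the comparison with $\varphi$: since $(-\Delta)^s(\varphi-\alpha u_0)=-\alpha<0$ in $\Omega$, the difference is a strict subsolution and the weak maximum principle is unavailable. This alternative should simply be dropped; your primary route through the explicit Green's function estimate $G_{B_R}(x,y)\ge c_0(R-|x|)^s$ uniformly for $y\in\overline{B_r}$ is the one that works.

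For comparison, the argument in \cite{RS} proceeds somewhat differently: rather than passing through an auxiliary Poisson problem and its Green's representation, they build the barrier directly by computing the fractional Laplacian of explicit profiles of the form $(1-|x|^2)_+^s$ and perturbations thereof. Your approach trades that explicit computation for the (equally classical) Green's function bounds for the ball; both rely on comparable machinery.
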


\noindent
In the following sections we will use the following fundamental regularity estimate proved in \cite[Theorem 1.2]{RS}.

\begin{thm}
\label{ros}
Let $u$ be a weak solution of \eqref{dirichlet} with $f\in L^\infty(\Omega)$, $g=0$. Then there exist $\alpha\in (0, \min\{s, 1-s\})$ such that $u\in C^{0,\alpha}_\delta(\overline{\Omega})$ (see \eqref{defcdelta}) and $C=C(\Omega, N, s)$ such that
\[
\| u\|_{\alpha, \delta}\leq C\|f\|_\infty.
\]
\end{thm}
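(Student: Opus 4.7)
The plan is to establish the estimate in two stages: first, a pointwise bound $|u(x)|\le C\|f\|_\infty\,\delta(x)^s$ on $\Omega$; second, H\"older regularity of the quotient $u/\delta^s$ up to the boundary, via a blow-up/compactness scheme paired with a Liouville-type classification for $s$-harmonic functions on the half-space.

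For the first stage, I would start from an $L^\infty$ bound $\|u\|_\infty\le C\|f\|_\infty$, obtained by combining Theorem \ref{dkp} with a standard truncation/iteration argument absorbing $f$ into the De Giorgi scheme. The barrier in Lemma \ref{subs} then provides, for every $x_0\in\partial\Omega$ and after localising in a ball, a function $\varphi$ bounded below by $c(R-|x|)_+^s$ which is $s$-harmonic in an annular region and positive on an inner ball. Using $\varphi$ as a comparison function with the weak maximum principle from Section \ref{s2} one sandwiches $u$ between $\pm C\|f\|_\infty\,\varphi$, yielding $|u|\le C\|f\|_\infty\,\delta^s$ in a neighbourhood of $\partial\Omega$. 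Interior $C^{2s-\eps}$ estimates for the fractional Laplacian with bounded right-hand side cover the rest of $\Omega$, so $u/\delta^s\in L^\infty$ and is locally H\"older in $\Omega$; only the boundary regularity of $u/\delta^s$ is genuinely at stake.

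For the second stage I would argue by contradiction. Assuming the $C^{0,\alpha}_\delta$ seminorm of solutions is not uniformly controlled by $\|f\|_\infty$, pick a sequence $u_k$ of solutions with pairs $x_k,y_k\in\overline\Omega$ realising oscillations larger than $k\|f_k\|_\infty\,r_k^\alpha$ at scale $r_k:=|x_k-y_k|$. Up to a subsequence $r_k\to 0$ and, after flattening the boundary near the cluster point via a $C^{1,1}$ diffeomorphism and rescaling, the normalised quotients
\[
v_k(z):=\frac{(u_k/\delta^s)(x_k+r_k z)-c_k}{M_k\,r_k^\alpha}
\]
satisfy uniform H\"older bounds on compact sets, thanks to the pointwise decay from the first stage together with interior estimates for the rescaled problem. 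Extracting a limit $v$ one obtains $(-\Delta)^s v=0$ in a half-space $H$, $v\equiv 0$ in $\R^N\setminus H$, and $|v(z)|\le C(1+|z|^\alpha)$ with $\alpha<s$. A Liouville-type classification then forces $v$ to be a scalar multiple of $\mathrm{dist}(\cdot,\partial H)^s$, whence the H\"older quotient of $v/\mathrm{dist}(\cdot,\partial H)^s$ vanishes identically, contradicting the normalisation at the test pair.

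The main obstacle is precisely the Liouville classification on the half-space: excluding non-trivial $s$-harmonic functions of subcritical growth $|z|^\alpha$ with $\alpha<s$ requires either a careful analysis of the Poisson kernel of $(-\Delta)^s$ on $H$ or a lift via the Caffarelli--Silvestre extension to a degenerate elliptic problem. The restriction $\alpha<\min\{s,1-s\}$ enters exactly here: $\alpha<s$ is dictated by the Liouville step, while $\alpha<1-s$ arises from the $C^{1,1}$ straightening of $\partial\Omega$ when transporting the blow-up back to $\Omega$. A secondary technicality is the alternative interior blow-up, arising when $\mathrm{dist}(x_k,\partial\Omega)\gg r_k$: this reduces to the classical interior Liouville theorem for $s$-harmonic functions of sublinear growth and is handled analogously, but must be treated separately.
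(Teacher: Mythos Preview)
The paper does not prove this theorem: it is quoted from \cite[Theorem 1.2]{RS} and used as a black box throughout. So there is no in-paper argument to compare your sketch against.

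As for your sketch on its own terms, the two-stage architecture---first $|u|\le C\|f\|_\infty\delta^s$, then H\"older regularity of $u/\delta^s$---matches that of \cite{RS}. Two points deserve comment. First, the function in Lemma~\ref{subs} is $s$-harmonic, so $C\|f\|_\infty\,\varphi$ is \emph{not} a supersolution of $(-\Delta)^s u=f$ and the sandwich you describe does not go through as stated; the actual barrier in \cite{RS} is an explicit supersolution (essentially the torsion function, built from $(r^2-|x|^2)_+^s$ in balls tangent to $\partial\Omega$), which absorbs the bounded right-hand side. Second, for the boundary H\"older estimate on $u/\delta^s$, \cite{RS} does not run a blow-up/compactness scheme with a half-space Liouville theorem; it uses a direct Krylov-type oscillation-decay argument, iterating a comparison with the barrier to obtain geometric decay of the oscillation of $u/\delta^s$ on dyadic scales near the boundary. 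Your blow-up route is a legitimate alternative (developed in later work, including by the same authors, for more general nonlocal operators), but it requires the half-space Liouville classification as an independent ingredient---which you correctly flag as the crux and which is not available from the tools assembled in the present paper.
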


\noindent
We now prove a weak maximum principle for weak supersolutions of problem \eqref{dirichlet}. While the non-negativity result is well known, we could not find a statement of the semicontinuity property in the literature.

\begin{thm}\label{wmp}
If $u\in H^s(\R^N)$ is a weak supersolution of \eqref{dirichlet} with $f=0$ and $g\in H^s(\R^N)_+$, then $u\ge 0$ a.e. in $\Omega$ and $u$ admits a lower semi-continuous representative in $\Omega$. 
\end{thm}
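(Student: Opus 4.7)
The proof splits into two independent parts. For the nonnegativity, the natural move is to test the supersolution inequality with $v := u_-$. This function belongs to $X(\Omega)_+$: the truncation $t \mapsto t_-$ is Lipschitz and therefore preserves $H^s(\R^N)$, while the hypothesis $u \geq g \geq 0$ a.e.\ on $\R^N \setminus \Omega$ forces $u_- = 0$ there. Splitting $u = u_+ - u_-$ and using the pointwise identity
\[
(u(x)-u(y))(u_-(x)-u_-(y)) = -(u_-(x)-u_-(y))^2 - u_+(x)u_-(y) - u_+(y)u_-(x) \leq -(u_-(x)-u_-(y))^2,
\]
one obtains $0 \leq \langle u, u_-\rangle_X \leq -[u_-]_s^2$, so $u_-$ is constant; as it already vanishes on the positive-measure set $\R^N \setminus \Omega$, we conclude $u_- \equiv 0$ a.e. In particular $u \geq 0$ on all of $\R^N$, a fact we will reuse below.

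For the lower semicontinuous representative, define, for $x \in \Omega$,
\[
u_*(x) := \lim_{r \to 0^+} \essinf_{B_r(x)} u.
\]
The limit exists because $r \mapsto \essinf_{B_r(x)} u$ is nondecreasing as $r \downarrow 0$, and $u_*$ is l.s.c.\ (being a supremum of functions that are themselves l.s.c.\ in $x$). It then suffices to verify $u_* = u$ a.e.\ in $\Omega$. The easy inequality $u_* \leq u$ a.e.\ is purely measure-theoretic: it follows by intersecting the full-measure sets $\{x : \essinf_{B_{1/n}(x)} u \leq u(x)\}$ over $n \in \N$.

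The reverse inequality is the heart of the matter, and exploits that $u$ a weak supersolution of $(-\Delta)^s u = 0$ makes $-u$ a weak subsolution. Fix $\eta > 0$ and a common Lebesgue point $x_0$ of $u$ and $u^2$. Applying Theorem \ref{dkp} to $-u$ with $k := -u(x_0)+\eta$ and $r$ small enough that $B_r(x_0) \subset \Omega$ yields
\[
\esssup_{B_{r/2}(x_0)}(-u) \leq -u(x_0) + \eta + \mathrm{Tail}\bigl((u(x_0)-u-\eta)_+; x_0, r/2\bigr) + C \left(\intmed_{B_r(x_0)}(u(x_0)-u-\eta)_+^2\, dx\right)^{1/2}.
\]
The $L^2$-average tends to $0$ as $r \to 0$ by the Lebesgue differentiation theorem applied to $u^2$. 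The main obstacle is the tail term; however, the nonnegativity $u \geq 0$ from the first part provides the pointwise bound $(u(x_0)-u-\eta)_+ \leq u(x_0)$, and a dyadic decomposition of $\R^N\setminus B_{r/2}(x_0)$ into annuli $\{r2^{k-1}\leq |x-x_0|\leq r2^k\}$, combined with the vanishing density at $x_0$ of the set $\{u < u(x_0)-\eta\}$ (which lies in the exceptional set for the Lebesgue point), shows that the tail vanishes as $r \to 0$. Rearranging the inequality gives $\essinf_{B_{r/2}(x_0)} u \geq u(x_0) - 2\eta$ for sufficiently small $r$, hence $u_*(x_0) \geq u(x_0) - 2\eta$; letting $\eta \to 0$ finishes the proof.
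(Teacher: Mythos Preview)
Your argument is correct in substance and follows the paper's architecture: test with $u_-$ for nonnegativity, define the l.s.c.\ regularization $u_*$, and prove $u_*=u$ a.e.\ by applying Theorem~\ref{dkp} to the subsolution $-u$ at Lebesgue points. One small slip: your justification that $u_*$ is l.s.c.\ ``being a supremum of functions that are themselves l.s.c.\ in $x$'' is wrong, since $x\mapsto \essinf_{B_r(x)}u$ is in general \emph{upper} semicontinuous. The conclusion is still true: if $u_*(x_0)>c$ then $u>c'$ a.e.\ on some $B_r(x_0)$ for some $c'>c$, hence $u_*(x)\ge c'>c$ for every $x\in B_{r/2}(x_0)$.

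The genuine difference from the paper is the handling of the nonlocal tail. The paper dispatches it via Cauchy--Schwarz,
\[
\mathrm{Tail}\big((u(x_0)-u)_+;x_0,r/2\big)\le C r^s\Big(\int_{\R^N}\frac{|u(x)-u(x_0)|^2}{|x-x_0|^{N+2s}}\,dx\Big)^{1/2},
\]
the integral being finite for a.e.\ $x_0$ by Fubini and $[u]_s<\infty$, so the factor $r^s$ kills the tail. You instead exploit the nonnegativity from the first step to bound the integrand by $u(x_0)\,\chi_{\{u<u(x_0)-\eta\}}$ and then use that $\{u<u(x_0)-\eta\}$ has density zero at $x_0$; a dyadic sum $\sum_k 2^{-2sk}\theta(2^k r)$ with $\theta(\rho)\to 0$ and $\theta\le 1$ indeed tends to $0$ by dominated convergence in $k$. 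Both routes work; the paper's is shorter and uses the $H^s$-regularity directly, while yours needs the auxiliary parameter $\eta$ and the dyadic estimate but relies only on local $L^1$-information plus the global sign, and would survive in situations where the pointwise finiteness of $\int|u(x)-u(x_0)|^2/|x-x_0|^{N+2s}\,dx$ is not available.
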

\begin{proof}
First we prove that $u\in H^s(\R^N)_+$. Since $u\ge g\ge 0$ a.e. in $\R^N\setminus\Omega$, we have $u_-\in X(\Omega)_+$. So, the elementary inequality $(a-b)(a_--b_-)\le -(a_--b_-)^2$, $a, b\in \R$, yields
\[0\le\int_{\R^{2N}}\frac{(u(x)-u(y))(u_-(x)-u_-(y))}{|x-y|^{N+2s}} \, dx\, dy\le-[u_-]_{s}^2,\]
hence $u_-=0$, namely $u\in H^s(\R^N)_+$.
Now we find a lower semi-continuous function $u^*$ such that $u(x)=u^*(x)$ a.e. in $\Omega$. Set for all $x_0\in\R^N$
\[u^*(x_0)=\essliminf_{x\to x_0} u(x_0).\]
Since $u\in H^s(\R^N)_+$ we have $u^*\ge 0$ a.e. and $u^*$ is lower semi-continuous in $\Omega$. Now assume that $x_0\in\Omega$ is a Lebesgue point for $u$ and define $u(x_0)$ accordingly, noting that
\[
u(x_0):=\lim_{r\to 0^+}\intmed_{B_r(x_0)} u\, dx\geq \lim_{r\to 0^+}\essinf_{B_r(x_0)} u=u^*(x_0).
\]
To prove the reverse inequality, we apply Theorem \ref{dkp} to the function $-u$ (which is a weak subsolution of \eqref{dirichlet}) with $k=-u(x_0)$ and get
\[
\esssup_{B_{r/2}(x_0)}(-u) \leq -u(x_0)+{\rm Tail} ((u(x_0)-u)_+; x_0, r/2) +C\, \Big(\intmed_{B_r(x_0)} (u(x_0)-u(x))_+^2\, dx\Big)^{\frac 1 2} .
\]
Letting $r\to 0^+$, since $x_0$ is a Lebesgue point we have
\[
\lim_{r\to 0^+}\Big(\intmed_{B_r(x_0)} (u(x_0)-u(x))_+^2 \, dx\Big)^{\frac 1 2}=0.
\]
Besides, by the H\"older inequality we have
\begin{align*}
& {\rm Tail} ((u(x_0)-u)_+; x_0, r/2)  \\
&\le r^{2s}\Big(\int_{\R^N\setminus B_r(x_0)}\frac{(u(x_0)-u(x))_+^2}{|x_0-x|^{N+2s}}\, dx\Big)^\frac{1}{2}\Big(\int_{\R^N\setminus B_r(x_0)}\frac{1}{|x_0-x|^{N+2s}}\, dx\Big)^\frac{1}{2} \\
&\le Cr^s\Big(\int_{\R^N}\frac{(u(x_0)-u(x))^2}{|x_0-x|^{N+2s}}\, dx\Big)^\frac{1}{2}\to 0\quad \text{as $r\to 0^+$},
\end{align*}
since, being $u\in H^s(\R^N)$, the integral is finite for a.e. $x_0\in\Omega$. So we have
\[\lim_{r\to 0^+}\esssup_{B_{r/2}(x_0)}(-u)\le -u(x_0),\]
i.e. $u^*(x_0)\ge u(x_0)$ for a.e. Lebesgue point $x_0\in \Omega$ for $u$, and hence for a.e. $x_0\in \Omega$. 
\end{proof}

\noindent
Henceforth any weak supersolution to \eqref{dirichlet}, with $f=0$, will be identified with its 
lower semi-continuous regularization, and any weak subsolution with its upper semi-continuous 
regularization, so that their value at any point is well defined.
\vskip2pt
\noindent
By means of Theorem \ref{wmp} and Lemma \ref{subs} we can prove the following strong maximum principle. 

\begin{thm}\label{smp}
If $u\in H^s(\R^N)\setminus\{0\}$ is a weak supersolution of \eqref{dirichlet} with $f=0$ and $g\ge 0$ a.e. in $\R^N$, then $u>0$ in $\Omega$.
\end{thm}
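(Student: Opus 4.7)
I plan to argue by contradiction, assuming that $u$ (identified with its lower semi-continuous representative according to the convention preceding the theorem) vanishes at some $x_0\in\Omega$, and using the inner barrier of Lemma~\ref{subs} to produce a positive pointwise lower bound at $x_0$. As a preliminary reduction, I would first show that $u$ cannot be identically zero a.e.\ in $\Omega$: plugging any $v\in X(\Omega)_+$ with $v\not\equiv 0$ into the supersolution inequality and splitting the Gagliardo integral over $\Omega\times\Omega$, $(\R^N\setminus\Omega)^2$ and the two symmetric mixed pieces, the vanishing of $u$ inside $\Omega$ and of $v$ outside $\Omega$ kills the diagonal blocks and leaves
\[
-2\int_\Omega\int_{\R^N\setminus\Omega}\frac{u(y)\,v(x)}{|x-y|^{N+2s}}\,dx\,dy\ge 0,
\]
so by Fubini $u=0$ a.e.\ in $\R^N\setminus\Omega$, hence $u\equiv 0$ in $\R^N$, contradicting the hypothesis. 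Thus $\{u>0\}\cap\Omega\ne\emptyset$, while $\{u=0\}\cap\Omega$ is closed in $\Omega$ by lower semi-continuity; by connectedness of $\Omega$ it is enough to prove that $\{u=0\}\cap\Omega$ is also open.

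To this end fix $x_0\in\Omega$ with $u(x_0)=0$, pick $\rho>0$ with $B_{2\rho}(x_0)\subset\Omega$, and assume, towards a contradiction, that some $y_0\in B_\rho(x_0)$ satisfies $u(y_0)>0$. Lower semi-continuity at $y_0$ yields $\varepsilon,r_0>0$ with $u\ge\varepsilon$ on $B_{r_0}(y_0)$. Choose $r\in(0,\min\{r_0,|x_0-y_0|\})$ and $R=\rho$, so that $B_R(y_0)\subset B_{2\rho}(x_0)\subset\Omega$ and $x_0\in D:=B_R(y_0)\setminus\overline B_r(y_0)$. Let $\varphi$ be the function produced by Lemma~\ref{subs} for these radii, and set $\varphi_0(x):=\varphi(x-y_0)$. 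Then $w:=u-\varepsilon\varphi_0$ is a weak supersolution of $(-\Delta)^sw=0$ in $D$ (since $\varphi_0$ solves the equation weakly in $D$ and $D\subset\Omega$), and $w\ge 0$ a.e.\ on $\R^N\setminus D$: outside $B_R(y_0)$ we have $\varepsilon\varphi_0=0\le u$, while on $\overline B_r(y_0)\subset B_{r_0}(y_0)$ we have $\varepsilon\varphi_0=\varepsilon\le u$ a.e.

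Testing the supersolution inequality for $w$ against $w_-\in X(D)_+$, the elementary inequality $(a-b)(a_--b_-)\le-(a_--b_-)^2$ already used in the proof of Theorem~\ref{wmp} forces $w_-\equiv 0$, i.e.\ $u\ge\varepsilon\varphi_0$ a.e.\ in $\R^N$. Taking ess-liminf as $x\to x_0$ and exploiting that $\varphi_0$ is continuous at the interior point $x_0\in D$ by interior regularity of weak $s$-harmonic functions on the annulus, while $u$ is l.s.c., one obtains
\[
0=u(x_0)\ge\varepsilon\,\varphi_0(x_0)\ge c\,\varepsilon\,(R-|x_0-y_0|)^s>0,
\]
which is the desired contradiction; thus $\{u=0\}\cap\Omega$ is open, must be empty, and $u>0$ in all of $\Omega$. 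The main technical point is the last step: converting the a.e.\ inequality $u\ge\varepsilon\varphi_0$ into a pointwise one at the specific point $x_0$ forces us to appeal both to the l.s.c.\ regularization from Theorem~\ref{wmp} and to interior continuity of the barrier $\varphi_0$ on the open annulus $D$.
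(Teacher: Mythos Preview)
Your argument is correct and follows essentially the same route as the paper: a contradiction via the barrier of Lemma~\ref{subs} together with the weak maximum principle. The paper picks a point $x_1\in\partial\{u>0\}\cap\Omega$ and centers the barrier at a nearby positive point, while you phrase the same idea as ``$\{u=0\}\cap\Omega$ is clopen''; the analytic core is identical.

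Two small remarks. First, your preliminary reduction (ruling out $u\equiv 0$ a.e.\ in $\Omega$) is a genuine addition: the paper simply asserts that $\Omega_+$ is nonempty, which tacitly uses exactly this step. Second, your appeal to interior regularity of $\varphi_0$ is unnecessary. Lemma~\ref{subs} already gives the a.e.\ bound $\varphi_0(x)\ge c(R-|x-y_0|)_+^s$, and the right-hand side is continuous; hence
\[
u(x_0)=\essliminf_{x\to x_0}u(x)\ \ge\ \essliminf_{x\to x_0}\varepsilon\,c\,(R-|x-y_0|)_+^s\ =\ \varepsilon\,c\,(R-|x_0-y_0|)^s>0
\]
without knowing anything further about $\varphi_0$ at $x_0$. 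This is exactly how the paper closes the argument as well.
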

\begin{proof}
We argue by contradiction, assuming that $u$ vanishes at some point of $\Omega$. We recall that, by Theorem \ref{wmp}, $u\ge 0$ in $\Omega$ and $u$ is lower semi-continuous. So, assuming without loss of generality that $\Omega$ is connected, the set
\[\Omega_+=\{x\in\Omega:\,u(x)>0\}\]
is open, nonempty and has a boundary in $\Omega$. Pick $x_1\in\partial\Omega_+\cap\Omega$ and set $\delta(x_1)=:2R>0$. By lower semi-continuity and $u\ge 0$, we get $u(x_1)=0$. We can find $x_0\in\Omega_+\cap B_R(x_1)$, and some $r\in(0,R)$ such that $u(x)\ge u(x_0)/2$ for all $x\in B_r(x_0)$. Let $\varphi\in H^s(\R^N)$ be as in Lemma \ref{subs}, and set for all $x\in\R^N$
\[w(x)=u(x)-\frac{u(x_0)}{2}\varphi(x-x_0).\]
It is easily seen that $w\in H^s(\R^N)$ is a weak supersolution of \eqref{dirichlet} in the domain $B_R(x_0)\setminus\overline B_r(x_0)$, with $g=0$. Hence, by Theorem \ref{wmp} we have $w\ge 0$ a.e. in $B_R(x_0)\setminus\overline B_r(x_0)$. In particular, noting that $x_1\in B_R(x_0)\setminus\overline B_r(x_0)$, we see that
\[u(x_1)\ge \frac{u(x_0)}{2}\varphi(x_1-x_0)\ge\frac{u(x_0)}{2}c(R-|x_1-x_0|)_+^s>0\]
by Lemma \ref{subs}, a contradiction.
\end{proof}

\begin{rem}
It is worth noting that strong maximum principle type results for the fractional Laplacian were already known. A statement for smooth $s$-harmonic functions can be found in \cite[Proposition 2.7]{CRS}. The strong maximum principle  was proved by Silvestre for distributional supersolutions but under a stronger semicontinuity and compactness condition, see \cite[Proposition 2.17]{S}. In \cite[Lemma 12]{LL} the strong maximum principle was proved for viscosity supersolutions of the fractional $p$-Laplacian in the case $s<1-1/p$. Recently in \cite[Theorem A.1]{BF} a weaker statement ($u>0$ {\em almost everywhere} without semicontinuity assumptions) has been proved through a logarithmic lemma for weak supersolutions of the fractional $p$-Laplacian.
\end{rem}

\noindent
We can now prove a fractional Hopf lemma. This has been first stated by Caffarelli, Roquejoffre \& Sire \cite[Proposition 2.7]{CRS} for smooth $s$-harmonic functions.

\begin{lem}\label{hl}
If $u\in H^s(\R^N)\setminus\{0\}$ is a weak supersolution of \eqref{dirichlet} with $f=0$ and $g\ge 0$ a.e. in $\R^N$, then there exists $C=C(u)>0$ such that $u(x)\ge C\delta(x)^s$ for all $x\in\overline\Omega$.
\end{lem}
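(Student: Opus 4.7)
The plan is to pointwise compare $u$ against a translated and scaled barrier function from Lemma \ref{subs}, with the $C^{1,1}$ regularity of $\partial\Omega$ providing the uniform interior sphere condition needed to make the argument work for every boundary point.

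First I would invoke Theorem \ref{smp} to conclude $u>0$ in $\Omega$ (and $u$ is lower semicontinuous). Next, I fix $r_0>0$ so small that the uniform interior ball condition holds: for every $p\in\partial\Omega$ there is a point $y_p\in\Omega$ with $B_{r_0}(y_p)\subseteq\Omega$ and $|p-y_p|=r_0$. For any $\eta_0\in(0,r_0)$ the compact set $\Omega_{r_0-\eta_0}:=\{z\in\overline\Omega:\delta(z)\ge r_0-\eta_0\}$ is contained in $\Omega$, and by lower semicontinuity and positivity of $u$ there is $m'=m'(u,\eta_0)>0$ with $u\ge m'$ on $\Omega_{r_0-\eta_0}$. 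Since $\delta(y_p)=r_0$, the triangle inequality gives $B_{\eta_0}(y_p)\subseteq\Omega_{r_0-\eta_0}$, so $u\ge m'$ on $B_{\eta_0}(y_p)$ uniformly in $p$.

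Now I apply Lemma \ref{subs} with $r=\eta_0$ and $R=r_0$ to obtain a function $\varphi\in H^s(\R^N)$ which is a weak solution of $(-\Delta)^s\varphi=0$ in the annulus $B_{r_0}(0)\setminus\overline B_{\eta_0}(0)$, equals $1$ on $\overline B_{\eta_0}(0)$, vanishes outside $B_{r_0}(0)$, and satisfies $\varphi(z)\ge c(r_0-|z|)_+^s$ everywhere. Setting $w(x):=u(x)-m'\varphi(x-y_p)$, one verifies that $w\in H^s(\R^N)$ is a weak supersolution of \eqref{dirichlet} in $B_{r_0}(y_p)\setminus\overline B_{\eta_0}(y_p)$ with $f=0$ and exterior data $w\ge 0$: indeed on $\overline B_{\eta_0}(y_p)$ we have $u\ge m'=m'\varphi(\cdot-y_p)$, while on $\R^N\setminus B_{r_0}(y_p)$ we have $u\ge 0=\varphi(\cdot-y_p)$. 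Theorem \ref{wmp} then gives $u(x)\ge m'c\,(r_0-|x-y_p|)_+^s$ in the annulus.

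To conclude, pick $x\in\Omega$ with $\delta(x)\le r_0-\eta_0$ and let $p\in\partial\Omega$ realize $\delta(x)=|x-p|$; by the interior ball condition $x$ lies on the segment $[p,y_p]$ and $|x-y_p|=r_0-\delta(x)$, so the barrier estimate becomes $u(x)\ge m'c\,\delta(x)^s$. For $x$ with $\delta(x)>r_0-\eta_0$ we directly use $u(x)\ge m'\ge (m'/{\rm diam}(\Omega)^s)\,\delta(x)^s$. Choosing $C=\min\{m'c,\,m'/{\rm diam}(\Omega)^s\}$ yields the claim on $\Omega$, which extends trivially to $\partial\Omega$ since $\delta=0$ there. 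The main subtlety is to obtain a uniform positive lower bound $m'$ on a neighborhood of the locus $\{y_p:p\in\partial\Omega\}$ of interior-ball centers—this is where the interplay between lower semicontinuity of $u$, compactness of $\Omega_{r_0-\eta_0}$, and the uniform interior sphere condition supplied by the $C^{1,1}$ hypothesis is crucial; once that is in hand, the comparison against $\varphi$ is routine.
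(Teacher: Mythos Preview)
Your proof is correct and follows essentially the same strategy as the paper: use Theorem~\ref{smp} for strict positivity and lower semicontinuity, extract a uniform positive lower bound on a compact set away from the boundary, then slide the barrier $\varphi$ of Lemma~\ref{subs} along interior tangent balls (granted by the $C^{1,1}$ hypothesis) and apply Theorem~\ref{wmp} in each annulus. The only cosmetic difference is in the bookkeeping of parameters: the paper fixes a single scale $h$ and uses barriers on $B_{2h}\setminus\overline B_h$ centered at $x_2=\Pi(x_0)-2h\,\nu(\Pi(x_0))$, whereas you work with two parameters $r_0>\eta_0$ and center at the interior-ball center $y_p$; both lead to the same geometric identity $|x-y_p|=r_0-\delta(x)$ (resp.\ $|x_0-x_2|=2h-\delta(x_0)$) that converts the barrier bound into $C\delta(x)^s$.
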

\begin{proof}
Let
\[\Omega_h=\{x\in\overline\Omega:\,\delta(x)\le h\}.\] 
We know from Theorems \ref{wmp}, \ref{smp} that $u$ is lower semi-continuous and $u(x)>0$ in $\Omega$. Hence, by reducing $C>0$ if necessary, we only need to prove the lower bound on $\Omega_h$, where it holds
\begin{equation}
\label{defmh}
\inf_{\overline\Omega\setminus\Omega_h}u=m_h>0.
\end{equation}
By classical results (see Aikawa, Kiplel\"{a}inen, Shanmugalingam \& Zhong \cite{AKSZ}) we know that $C^{1,1}$-regularity of $\partial\Omega$ provides a uniform interior sphere condition. This in turn implies that there exists a sufficiently small $h>0$ such that if  $l\in (0,2h]$ and $x\in\Omega_{2h}$
\begin{equation}\label{distance}
\delta(x)=l \quad \Leftrightarrow \quad 
 B_l(x)\subseteq \Omega 
\end{equation}
and the metric projection $\Pi:\Omega_{2h}\to\partial\Omega$ is well defined. We fix such an $h$ and for arbitrary $x_0\in\Omega_h$ set $x_1=\Pi(x_0)$, $x_2=x_1-2h\nu(x_1)$, where $\nu:\partial\Omega\to\R^N$ is the outward unit vector. Then $\delta(x_2)\le 2h$ by construction and through \eqref{distance} we have $B_{2h}(x_2)\subseteq\Omega$, which forces $\delta(x_2)=2h$.
Let $\varphi\in H^s(\R^N)$ be defined as in Lemma \ref{subs} with $R=2h$ and $r=h$ and set $v(x)=m_h\varphi(x-x_2)$ as per \eqref{defmh}. For all $x\in\overline B_h(x_2)\subseteq \Omega\setminus\Omega_h$ we have
\[u(x)\ge m_h=v(x),\]
so $u-v$ is a weak supersolution of \eqref{dirichlet} in $B_{2h}(x_2)\setminus\overline B_h(x_2)$ with $f=g=0$. By Theorem \ref{wmp} we have $u\ge v$ in $B_{2h}(x_2)\setminus\overline B_h(x_2)$. In particular, we have
\[u(x_0)\ge v(x_0)\ge C\big(2h-|x_0-x_2|\big)^s=C\delta(x_0)^s,\]
with $C>0$ depending on $h$, $m_h$ and $\Omega$, which concludes the proof.
\end{proof}

\section{A priori bounds}\label{apb}

\noindent
In this section we prove some {\em a priori} bounds for the weak solutions of problem \eqref{bvp}, both in the subcritical and critical cases. We will use an adaptation of the classical Moser iteration technique. A similar method was used by Brasco, Lindgren \& Parini \cite[Theorem 3.3]{BLP} for the first eigenfunctions of the fractional Laplacian (in fact, for a more general, nonlinear operator, see Section \ref{sec6} below), while most $L^\infty$-bounds for nonlocal equations are based on a different method, see \cite{FP,ILPS,SV4}. A fractional version of De Giorgi's iteration method was developed by Mingione \cite{M}. We introduce some notation: for all $t\in\R$ and $k>0$, we set
\begin{equation}\label{trunk}
t_k={\rm sgn}(t)\min\{|t|,k\}.
\end{equation}
The Moser method in the fractional setting is based on the following elementary inequality:

\begin{lem}\label{ineq}
For all $a,b\in\R$, $r\ge 2$, and $k>0$ we have
\[(a-b)(a|a|_k^{r-2}-b|b|_k^{r-2})\geq \frac{4(r-1)}{r^2}(a|a|_k^{\frac{r}{2}-1}-b|b|_k^{\frac{r}{2}-1})^2.\]
\end{lem}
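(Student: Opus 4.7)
The plan is to reduce the inequality to a pointwise one-variable differential inequality and then apply the Cauchy--Schwarz inequality to a suitable integral representation. Both sides of the claimed inequality are symmetric under the swap $a\leftrightarrow b$ (the left-hand side is a product of two differences that both change sign, and the right-hand side is a square), so we may assume $a\ge b$. Introduce the two scalar functions
\[
\phi(t):=t|t|_k^{r-2},\qquad \psi(t):=t|t|_k^{r/2-1},
\]
so that the inequality reads
\[
(a-b)\bigl(\phi(a)-\phi(b)\bigr)\ge \frac{4(r-1)}{r^2}\bigl(\psi(a)-\psi(b)\bigr)^2.
\]

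The key observation is that $\phi$ and $\psi$ are non-decreasing and locally Lipschitz (with a single ``kink'' at $|t|=k$), hence absolutely continuous. By the fundamental theorem of calculus and the Cauchy--Schwarz inequality,
\[
\bigl(\psi(a)-\psi(b)\bigr)^2=\Bigl(\int_b^a \psi'(t)\,dt\Bigr)^2\le (a-b)\int_b^a \psi'(t)^2\,dt,
\]
so it suffices to establish the pointwise bound
\begin{equation}\label{eq:pwbd}
\psi'(t)^2\le \frac{r^2}{4(r-1)}\,\phi'(t) \quad \text{for a.e.\ } t\in\R,
\end{equation}
which after integration over $[b,a]$ and combination with the Cauchy--Schwarz bound immediately gives the result.

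To verify \eqref{eq:pwbd} I split into the two regions where $\phi$ and $\psi$ are smooth. On $\{|t|<k\}$ one has $\phi(t)=t|t|^{r-2}$, $\psi(t)=t|t|^{r/2-1}$, giving $\phi'(t)=(r-1)|t|^{r-2}$ and $\psi'(t)=\tfrac{r}{2}|t|^{r/2-1}$, so both sides of \eqref{eq:pwbd} equal $\tfrac{r^2}{4}|t|^{r-2}$ and equality holds. On $\{|t|>k\}$ one has $\phi(t)=k^{r-2}\,t$ and $\psi(t)=k^{r/2-1}\,t$, so $\phi'(t)=k^{r-2}$ and $\psi'(t)^2=k^{r-2}$; the desired bound reduces to $\tfrac{r^2}{4(r-1)}\ge 1$, i.e.\ $(r-2)^2\ge 0$, which holds for every $r\ge 2$.

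The only mild subtlety is the case in which $a$ and $b$ have opposite signs, so that the integration interval $[b,a]$ crosses $0$ and also possibly the thresholds $\pm k$; however, since $\phi$ and $\psi$ are continuous and piecewise $C^1$ with a finite jump in the derivative only at $|t|=k$, both integral identities remain valid on the whole interval, and \eqref{eq:pwbd} holds a.e. Thus the main and essentially only technical point is the pointwise comparison \eqref{eq:pwbd} and, as shown above, in both cases it reduces to an elementary estimate.
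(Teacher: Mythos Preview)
Your proof is correct and follows essentially the same strategy as the paper's: express both sides via the auxiliary functions $\phi,\psi$, apply the Cauchy--Schwarz inequality to $\int_b^a \psi'$, and reduce to the pointwise bound $\psi'(t)^2\le \tfrac{r^2}{4(r-1)}\phi'(t)$, which is an equality for $|t|<k$ and reduces to $(r-2)^2\ge 0$ for $|t|>k$. The paper's function $h$ is, up to the constant factor $2/r$, precisely your $\psi'$; your formulation is in fact slightly cleaner, since working directly with $\psi'$ avoids a sign slip present in the paper's definition of $h$.
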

\begin{proof}
By the symmetry of the inequality, we may assume $a\ge b$. We set for all $t\in\R$
\[h(t)=\begin{cases}
{\rm sgn}(t)|t|^{\frac{r}{2}-1} & \text{if $|t|<k$} \\
\displaystyle\frac{2}{r}{\rm sgn}(t)k^{\frac{r}{2}-1} & \text{if $|t|\ge k$}.
\end{cases}\]
It is readily seen that
\[\int_b^a h(t)\,dt=\frac{2}{r}(a|a|_k^{\frac{r}{2}-1}-b|b|_k^{\frac{r}{2}-1})\]
and, since $4(r-1)\le r^2$, a similar computation gives
\[\int_b^a h(t)^2\,dt\le\frac{1}{r-1}(a|a|_k^{r-2}-b|b|_k^{r-2}).\]
Now, the Schwartz inequality yields
\[\Big(\int_b^a h(t)\,dt\Big)^2\le(a-b)\int_b^a h(t)^2\,dt,\]
which is the conclusion.
\end{proof}

\noindent
We prove an $L^\infty$-bound on the weak solutions of \eqref{bvp} (in the subcritical case such bound is uniform):

\begin{thm}\label{linfty}
If $f$ satisfies \eqref{gc}, then for any weak solution $u\in X(\Omega)$ of \eqref{bvp} we have $u\in L^\infty(\Omega)$. Moreover, if $q<2^*_s$ in \eqref{gc}, then there exists a function $M\in C(\R^+)$, only depending on the constants in \eqref{gc}, $N$, $s$ and $\Omega$, such that
\[\|u\|_\infty\le M(\|u\|_{2^*_s}).\]
\end{thm}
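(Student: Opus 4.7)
The plan is to run a Moser-type iteration on the weak formulation \eqref{weak}, using the truncation \eqref{trunk} and Lemma \ref{ineq}. For $r \ge 2$ and $k > 0$, test with $v := u|u|_k^{r-2}$; this belongs to $X(\Omega)$ because the map $t \mapsto t|t|_k^{r-2}$ is Lipschitz with constant $(r-1)k^{r-2}$, so $[v]_{s} \le (r-1)k^{r-2}[u]_{s}$, and $v$ vanishes outside $\Omega$ together with $u$. Applying Lemma \ref{ineq} pointwise inside $\langle u,v\rangle_X$ and invoking the fractional Sobolev embedding $X(\Omega)\hookrightarrow L^{2^*_s}(\Omega)$, with $\phi_k := u|u|_k^{r/2-1}$, I obtain
\[
\frac{C(r-1)}{r^2}\,\|\phi_k\|_{2^*_s}^2 \;\le\; \int_\Omega f(x,u)\,u\,|u|_k^{r-2}\,dx
\]
for some $C=C(N,s,\Omega)>0$. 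Inserting \eqref{gc} and using $|u|_k^{r-2}\le |u|^{r-2}$, then letting $k\to\infty$ via monotone convergence, I arrive at the Moser inequality
\[
\|u\|_{r\, 2^*_s/2}^{r} \;\le\; C_r\bigl(\|u\|_{r-1}^{r-1} + \|u\|_{q+r-2}^{q+r-2}\bigr),
\]
with $C_r$ growing polynomially in $r$.

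For the subcritical case $q < 2^*_s$, the bootstrap closes cleanly: given control of $\|u\|_{p_n}$, the choice $r_n := p_n - q + 2$ makes the rightmost exponent equal $p_n$ and produces the recursion $p_{n+1} = (2^*_s/2)(p_n - q + 2)$ with unstable fixed point $p^{*} = (q-2)\,2^*_s/(2^*_s - 2)$. The inequality $p^{*} < 2^*_s$ is equivalent to $q < 2^*_s$, so starting from $p_0 = 2^*_s$ the sequence $p_n$ diverges geometrically with ratio $2^*_s/2 > 1$. A standard bookkeeping of the constants $C_{r_n}^{1/p_{n+1}}$ (whose product converges by the polynomial growth of $C_{r_n}$ against the geometric growth of $p_{n+1}$) yields the uniform bound $\|u\|_\infty \le M(\|u\|_{2^*_s})$ with $M$ continuous and depending only on $N$, $s$, $\Omega$ and the constants in \eqref{gc}.

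The critical case $q = 2^*_s$ is the main obstacle: now $p^{*} = 2^*_s$, exactly the starting integrability, so the bootstrap cannot begin. The remedy is a Brezis--Kato type absolute continuity argument applied to the critical term. Rewriting $|u|^{2^*_s}|u|_k^{r-2} = |u|^{2^*_s - 2}\phi_k^2$ and splitting
\[
\int_\Omega |u|^{2^*_s-2}\phi_k^2\,dx \;\le\; A^{2^*_s-2}\|\phi_k\|_2^2 + \Bigl(\int_{\{|u|>A\}}|u|^{2^*_s}\,dx\Bigr)^{2s/N}\|\phi_k\|_{2^*_s}^2,
\]
and using that $\int_{\{|u|>A\}}|u|^{2^*_s} \to 0$ as $A \to \infty$, I fix $A = A(u)$ so large that the second factor is smaller than $C(r-1)/(2 a r^2)$ for one chosen $r > 2$. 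Absorbing that term on the left gives $u \in L^{r\, 2^*_s/2}(\Omega)$ with $r\,2^*_s/2 > 2^*_s$, and the subcritical bootstrap of the previous paragraph then resumes and concludes $u \in L^\infty(\Omega)$. The resulting bound depends on $u$ itself via the choice of $A$ and is therefore not uniform in $\|u\|_{2^*_s}$, consistent with the statement.
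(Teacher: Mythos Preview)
Your proposal is correct and follows essentially the same Moser iteration with truncated test functions $u|u|_k^{r-2}$, Lemma \ref{ineq}, and a Brezis--Kato absorption step in the critical case that the paper uses. The only organizational difference is that the paper, after the first bootstrap in the subcritical case, runs a \emph{second} iteration (with exponents $\gamma^n$ and the auxiliary bound $\|1+|u|^{q-1}\|_{\gamma'}\le H(\|u\|_{2^*_s})$) to make the uniform $L^\infty$ constant explicit, whereas you compress this into a single pass and invoke ``standard bookkeeping'' of the geometrically summable constants --- which indeed works; in the critical step you should also record the harmless constraint $2<r\le 2^*_s$ so that the residual term $A^{2^*_s-2}\|\phi_k\|_2^2$ stays bounded as $k\to\infty$.
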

\begin{proof}
 Let $u\in X(\Omega)$ be a weak solution of \eqref{bvp} and set $\gamma=(2^*_s/2)^{1/2}$. For all $r\ge 2$, $k>0$, the mapping $t\mapsto t|t|_k^{r-2}$ is Lipschitz in $\R$, hence $u|u|_k^{r-2}\in X(\Omega)$. We apply the fractional Sobolev inequality, Lemma \ref{ineq}, test \eqref{weak} with $u|u|_k^{r-2}$, and we use \eqref{gc} to obtain
\begin{equation}\label{sc1}
\begin{split}
\|u|u|_k^{\frac{r}{2}-1}\|_{2^*_s}^2 &\le \|u|u|_k^{\frac{r}{2}-1}\|_X^2\le \frac{Cr^2}{r-1}\langle u,u|u|_k^{r-2}\rangle_X \\
&\le Cr\int_\Omega |f(x,u)||u||u|_k^{r-2}\, dx \\
&\le Cr\int_\Omega\big(|u||u|_k^{r-2}+|u|^q|u|_k^{r-2}\big)\, dx,
\end{split}
\end{equation}
for some $C>0$ independent of $r\geq 2$ and $k>0$. Applying the Fatou Lemma as $k\to\infty$ yields
\begin{equation}\label{sc2}
\|u\|_{\gamma^2 r}\le Cr^\frac{1}{r}\Big(\int_\Omega\big(|u|^{r-1}+|u|^{r+q-2}\big)\, dx\Big)^\frac{1}{r}
\end{equation}
(where the right hand side may be $\infty$). Our aim is to develop from \eqref{sc2} a suitable bootstrap argument to prove that $u\in L^p(\Omega)$ for all $p\ge 1$. We define recursively a sequence $\{r_n\}$ by choosing $\mu>0$ and setting
\[r_0=\mu, \quad r_{n+1}=\gamma^2 r_n+2-q.\]
The only fixed point of $t\to\gamma^2 t+2-q$ is
\[\mu_0=\frac{q-2}{\gamma^2-1},\]
so we have $r_n\to+\infty$ iff $\mu>\mu_0$. We now split the proof into the subcritical and critical cases.
\vskip4pt
\noindent
$\bullet$ {\em Subcritical case: $q<2^*_s$}. We fix
\begin{equation}\label{sc3}
\mu=2^*+2-q>\max\{2,\mu_0\},
\end{equation}
and bootstrap on the basis of \eqref{sc2}. Since  $r_0+q-2=2^*_s$, we have $u\in L^{r_0+q-2}(\Omega)$ (in particular $u\in L^{r_0-1}(\Omega)$). Hence, choosing $r=r_0$ in \eqref{sc2}, we obtain a finite right hand side, so $u\in L^{\gamma^2 r_0}(\Omega)=L^{r_1+q-2}(\Omega)$, and so on. Iterating this argument and noting that $r\mapsto r^{1/r}$ is bounded in $[2,\infty)$, for all $n\in\N$ we have $u\in L^{\gamma^2 r_n}(\Omega)$ and
\[\|u\|_{\gamma^2 r_n}\le H(n,\|u\|_{2^*_s})\]
(henceforth, $H$ will denote a continuous function of one or several real variables, whose definition may change case by case). By \eqref{sc3} we know that $\gamma^2 r_n\to\infty$ as $n\to\infty$, so for all $p\ge 1$ we can find $n\in\N$ such that $\gamma^2 r_n\ge p$. Applying H\"older inequality, for all $p\ge 1$ we have $u\in L^p(\Omega)$ and
\begin{equation}\label{sc4}
\|u\|_p\le H(p,\|u\|_{2^*_s}).
\end{equation}
The $L^p$-bound above is not yet enough to prove our assertion, as the right hand side may not be bounded as $p\to\infty$. Thus, we need to improve \eqref{sc4} to a {\em uniform} $L^p$-bound. Fix $\gamma'=\gamma/(\gamma-1)$ and notice that from \eqref{sc4} and H\"older inequality it follows
\[
\|1+|u|^{q-1}\|_{\gamma'}\le H(\|u\|_{2^*_s}).
\]
Therefore, for any $r\ge 2$ we have
\begin{align*}
\int_\Omega\big(|u|^{r-1}+|u|^{r+q-2}\big)\, dx &\le \|1+|u|^{q-1}\|_{\gamma'}\||u|^{r-1}\|_\gamma \le H(\|u\|_{2^*_s})\|u\|_{\gamma(r-1)}^{r-1} \\
&\le H(\|u\|_{2^*_s})|\Omega|^\frac{1}{\gamma r}\|u\|_{\gamma r}^{r-1}.
\end{align*}
Noting that $r\mapsto|\Omega|^{1/(\gamma r)}$ is bounded in $[2,\infty)$, we see that
\[\int_\Omega\big(|u|^{r-1}+|u|^{r+q-2}\big)\, dx\le H(\|u\|_{2^*_s})\|u\|_{\gamma r}^{r-1}.\]
The inequality above can be used in \eqref{sc2} to obtain the following estimate:
\[\|u\|_{\gamma^2 r}^r\le H(\|u\|_{2^*_s})\|u\|_{\gamma r}^{r-1}.\]
Setting $v=u/H(\|u\|_{2^*_s})$ and $r=\gamma^{n-1}$ ($\gamma^{n-1}\ge 2$ for $n\in\N$ big enough), we have the following nonlinear recursive relation:
\[
\|v\|_{\gamma^{n+1}}\le\|v\|_{\gamma^n}^{1-\gamma^{1-n}}
\]
which, iterated, provides
\[\|v\|_{\gamma^n}\le\|v\|_\gamma^{\Pi_{i=0}^{n-2}(1-\gamma^{-i})}\qquad n\in \N.\]
It is easily seen that the sequence $(\Pi_{i=0}^{n-2}(1-\gamma^{-i}))$ is bounded in $\R$, so for all $n\in\N$ we have
\[\|v\|_{\gamma^n}\le H(\|u\|_{2^*_s}).\]
Going back to $u$, and recalling that $\gamma^n\to\infty$ as $n\to\infty$, we find $M\in C(\R_+)$ such that for all $p\ge 1$
\[\|u\|_p\le M(\|u\|_{2^*_s}),\]
i.e., from classical results in functional analysis, $u\in L^\infty(\Omega)$ and
\begin{equation}\label{sc7}
\|u\|_\infty\le H(\|u\|_{2^*_s}).
\end{equation}
\vskip4pt
\noindent
$\bullet$ {\em Critical case: $q=2^*_s$.}
We start from \eqref{sc1}, with $r=q+1>2$, and fix $\sigma>0$ such that $Cr\sigma<1/2$. Then there exists $K_0>0$ (depending on $u$) such that
\begin{equation}
\label{piccolo}
\left(\int_{\{|u|> K_0\}}|u|^q\, dx\right)^{1-\frac 2 q}\leq\sigma.
\end{equation}
By H\"older inequality and \eqref{piccolo} we have
\begin{align*}
\int_\Omega |u|^q|u|_k^{r-2} \, dx&\le K_0^{q+r-2}|\{|u|\le K_0\}|+\int_{\{|u|>K_0\}}|u|^q|u|_k^{r-2}\, dx \\
&\le K_0^{q+r-2}|\Omega|+\Big(\int_\Omega (u^2|u|_k^{r-2})^{\frac q 2}\, dx\Big)^{\frac 2 q}\Big(\int_{\{|u|>K_0\}}|u|^q \, dx\Big)^{1-\frac 2 q} \\
&\le K_0^{q+r-2}|\Omega|+\sigma\|u|u|_k^{\frac{r}{2}-1}\|_q^2.
\end{align*}
Recalling that $Cr\sigma<1/2$, and that \eqref{sc1} holds, we obtain
\[\frac{1}{2}\|u|u|_k^\frac{q-1}{2}\|_q^2\le Cr\big(\|u\|_q^q+K_0^{2q-1}|\Omega|\big).\]
Letting $k\to\infty$, we have
\[\|u\|_\frac{q(q+1)}{2}\le \tilde H(K_0,\|u\|_q)\]
(where, as above, $\tilde H$ is a continuous function). Now the bootstrap argument can be applied through \eqref{sc2}, starting with 
\[r_0=\mu=\frac{q(q+1)}{2}+2-q>\mu_0=2,\]
since $u\in L^{r_0+q-2}(\Omega)$. The rest of the proof follows {\em verbatim}, providing in the end $u\in L^\infty(\Omega)$ and
\begin{equation}\label{sc9}
\|u\|_\infty\leq\tilde M(K_0, \|u\|_{2^*_s})
\end{equation}
for a convenient function $\tilde M\in C(\R^2)$.
\end{proof}

\begin{rem}
In the critical case $q=2^*_s$, the uniform $L^\infty$-estimate \eqref{sc7} cannot hold true. We introduce the {\em fractional Talenti functions} by setting for all $\eps>0$ and $z\in\R^N$
\[{\mathcal T}_{\eps,z}(x)=\Big(\frac{\varepsilon}{\varepsilon^2+|x-z|^2}\Big)^\frac{N-2s}{2}.\]
It is readily seen that there exists $\Gamma(N,s)>0$ such that, for all $\eps>0$ and $z\in\R^N$, $\Gamma(N,s)\mathcal{T}_{\eps,z}$ is a positive solution of the fractional equation
\begin{equation}
\label{int-crit}
(-\Delta )^s u=u^{\frac{N+2s}{N-2s}} \,\,\,\quad\text{in $\R^N$,}
\end{equation}
Actually, in the local case $s=1$, Chen, Li \& Ou \cite{CLO} have proved that $\mathcal{T}_{\eps,z}$ are the {\em only} positive solutions of \eqref{int-crit}. We have $\|{\mathcal T}_{\eps,z}\|_{\infty}\to\infty$ as $\eps\to 0$ and,
by rescaling, it follows that $\|{\mathcal T}_{\eps,z}\|_{2^*_s}$ is independent of $\eps$. If $z\in\Omega$,
$\eps$ is very small (so that almost all the mass of ${\mathcal T}_{\eps,z}$ is contained in $\Omega$)
and we truncate ${\mathcal T}_{\eps,z}$ so that it is set equal to zero outside $\Omega$, we would find that
\eqref{sc7} is violated as $\eps\to 0$. Thus, it seems that the non-uniform estimate \eqref{sc9}, involving a real number $K_0>0$ such that \eqref{piccolo} holds for a convenient $\sigma>0$, cannot be improved in general.
\end{rem}

\section{Proof of Theorem~\ref{min-local}}\label{s3}

\noindent
{\bf Proof that $(i)$ implies $(ii)$.} We shall divide the proof into several steps:
\vskip2pt
\noindent
\underline{\em Case $u_0=0$.}
We note that $\Phi(u_0)=0$, so our hypothesis rephrases as
\begin{equation}\label{abs0}
\inf_{u\in X(\Omega)\cap\overline B_\rho^\delta(0)}\Phi(u)=0.
\end{equation}
Again, we consider separately the subcritical and critical cases.
\vskip4pt
\noindent
$\bullet$ {\em Subcritical case: $q<2^*_s$.}
We argue by contradiction, assuming that there exists a sequence $(\eps_n)$ in $(0,\infty)$ such that $\eps_n\to 0$ and for all $n\in\N$
\[\inf_{u\in\overline B_{\eps_n}^X(0)}\Phi(u)=m_n<0.\]
By \eqref{gc} and the compact embedding $X(\Omega)\hookrightarrow L^q(\Omega)$, the functional $\Phi$ is sequentially weakly lower semicontinuous in $X(\Omega)$, hence $m_n$ is attained at some $u_n\in \overline B_{\eps_n}^X(0)$ for all $n\in\N$. We claim that, for all $n\in\N$, there exists $\mu_n\le 0$ such that for all $v\in X(\Omega)$
\begin{equation}\label{lag}
\langle u_n,v\rangle_X-\int_\Omega f(x,u_n)v \, dx=\mu_n\langle u_n,v\rangle_X.
\end{equation}
Indeed, if $u_n\in B_{\eps_n}^X(0)$, then $u_n$ is a local minimizer of $\Phi$ in $X(\Omega)$, hence a critical point, so \eqref{lag} holds with $\mu_n=0$. If $u_n\in\partial B_{\eps_n}^X(0)$, then $u_n$ minimizes $\Phi$ restricted to the $C^1$-Banach manifold
\[\Big\{u\in X(\Omega):\,\frac{\|u\|_X^2}{2}=\frac{\eps_n^2}{2}\Big\},\]
so we can find a Lagrange multiplier $\mu_n\in\R$ such that \eqref{lag} holds. More precisely, testing \eqref{lag} with $-u_n$ and recalling that $\Phi(u)\ge\Phi(u_n)$ for all $u\in B_{\eps_n}^X(0)$, we easily get
\[0\le\Phi'(u_n)(-u_n)=-\mu_n\|u_n\|_X^2,\]
hence $\mu_n\le 0$.
\vskip2pt
\noindent
Setting $C_n=(1-\mu_n)^{-1}\in (0,1]$, we see that for all $n\in\N$ the function $u_n\in X(\Omega)$ is a weak solution of the auxiliary boundary value problem
\[\begin{cases}
(- \Delta)^s\, u =C_n f(x,u) & \text{in } \Omega \\
u = 0 & \text{in } \R^N \setminus \Omega,
\end{cases}\]
where the nonlinearity satisfies \eqref{gc} uniformly with respect to $n\in\N$. By Theorem \ref{linfty} (and recalling that $(u_n)$ is bounded in $L^{2^*_s}(\Omega)$), there exists $M>0$ such that for all $n\in\N$ we have $u_n\in L^\infty(\Omega)$ with $\|u_n\|_\infty\le M$. This, in turn, implies that for all $n\in\N$
\[\|C_n f(\cdot,u_n(\cdot))\|_\infty\le a(1+M^{q-1}).\]
Now we apply Theorem \ref{ros}, which assures the existence of $\alpha>0$ and $C>0$ such that, for all $n\in\N$, we have $u_n\in C^{0,\alpha}_\delta(\overline\Omega)$ with $\|u_n\|_{\alpha,\delta}\le Ca(1+M^{q-1})$. By the compact embedding $C^{0,\alpha}_\delta(\overline\Omega)\hookrightarrow C^0_\delta(\overline\Omega)$, up to a subsequence, we see that $(u_n)$ is strongly convergent in $C^0_\delta(\overline\Omega)$, hence (by a simple computation) $(u_n)$ is uniformly convergent in $\overline\Omega$. Since $u_n\to 0$ in $X(\Omega)$, passing to a subsequence, we may assume $u(x)\to 0$ a.e. in $\Omega$, so we deduce $u_n\to 0$ in $C^0_\delta(\overline\Omega)$. In particular, for $n\in\N$ big enough we have $\|u_n\|_{0,\delta}\le\rho$ together with
\[\Phi(u_n)=m_n<0,\]
a contradiction to \eqref{abs0}.
\vskip4pt
\noindent
$\bullet$ {\em Critical case: $q=2^*_s$.}
We need to overcome a twofold difficulty, as the critical growth both prevents compactness (and hence the existence of minimizers of $\Phi$ on closed balls of $X(\Omega)$), and does not allow to get immediately a uniform estimate on the $L^\infty$-norms of solutions of the auxiliary problem. Again we argue by contradiction, assuming that there exist sequences $(\eps_n)$ in $(0,\infty)$ and $(w_n)$ in $X(\Omega)$ such that for all $n\in\N$ we have $w_n\in\overline B_{\eps_n}^X(0)$ and $\Phi(w_n)<0$. For all $k>0$ we define $f_k,F_k:\overline\Omega\times\R\to\R$ by setting for all $(x,t)\in\overline\Omega\times\R$
\[f_k(x,t)=f(x,t_k), \quad F_k(x,t)=\int_0^t f_k(x,\tau)\,d\tau\]
($t_k$ defined as in \eqref{trunk}). Accordingly, we define the functionals $\Phi_k\in C^1(X(\Omega))$ by setting for all $u\in X(\Omega)$
\[
\Phi_k(u)=\frac{\|u\|_X^2}{2}-\int_\Omega F_k(x,u)\, dx.
\]
By the dominated convergence Theorem, for all $u\in X(\Omega)$ we have $\Phi_k(u)\to\Phi(u)$ as $k\to\infty$. So, for all $n\in\N$ we can find $k_n\ge 1$ such that $\Phi_{k_n}(w_n)<0$. Since $f_k$ has subcritical growth, for all $n\in\N$ there exists $u_n\in\overline B_{\eps_n}^X(0)$ such that
\[\Phi_{k_n}(u_n)=\inf_{u\in \overline B_{\eps_n}^X(0)}\Phi_{k_n}(u)\le\Phi_{k_n}(w_n)<0.\]
As in the previous case we find a sequence $(C_n)$ in $(0,1]$ such that $u_n$ is a weak solution of
\[\begin{cases}
(- \Delta)^s\, u =C_n f_{k_n}(x,u) & \text{in } \Omega \\
u = 0 & \text{in } \R^N \setminus \Omega,
\end{cases}\]
and the nonlinearities $C_n f_{k_n}$ satisfy \eqref{gc} uniformly with respect to $n\in\N$. We recall that $u_n\to 0$ in $X(\Omega)$, hence in $L^{2^*_s}(\Omega)$. So, \eqref{piccolo} holds with $K_0=0$ and $n\in\N$ big enough. Therefore, Theorem \ref{linfty} assures that $u_n\in L^\infty(\Omega)$ and that $\|u_n\|_\infty\le M$ for some $M>0$ independent of $n\in\N$. Now we can argue as in the subcritical case, proving that (up to a subsequence) $u_n\to 0$ in $C^0_\delta(\overline\Omega)$ and uniformly in $\overline\Omega$. In particular, for $n\in\N$ big enough we have $\|u_n\|_{0,\delta}\le\rho$ and $\|u_n\|_\infty\le 1$, hence
\[\Phi(u_n)=\Phi_{k_n}(u_n)<0,\]
a contradiction to \eqref{abs0}.
\vskip2pt
\noindent
\underline{\em Case $u_0\ne 0$.}
For all $v\in C^\infty_c(\Omega)$, we have in particular $v\in X(\Omega)\cap C^0_\delta(\overline\Omega)$, so the minimality ensures
\begin{equation}\label{crip}
\Phi'(u_0)(v)=0,\quad  v\in C^\infty_c(\Omega).
\end{equation}
Since $C^\infty_c(\Omega)$ is a dense subspace of $X(\Omega)$ (see Fiscella, Servadei \& Valdinoci \cite{FSV}) and $\Phi'(u_0)\in X(\Omega)^*$, equality \eqref{crip} holds in fact for all $v\in X(\Omega)$, i.e., $u_0$ is a weak solution of \eqref{bvp}. By Theorem \ref{linfty}, we have $u_0\in L^\infty(\Omega)$, hence $f(\cdot,u_0(\cdot))\in L^\infty(\Omega)$. Now Theorem \ref{ros} implies $u_0\in C^0_\delta(\overline\Omega)$. We set for all $(x,t)\in\Omega\times\R$
\[\tilde F(x,t)=F(x,u_0(x)+t)-F(x,u_0(x))-f(x,u_0(x))t,\]
and for all $v\in X(\Omega)$
\[
\tilde\Phi(v)=\frac{\|v\|_X^2}{2}-\int_\Omega \tilde F(x,v)\, dx.
\]
Clearly we have $\tilde\Phi\in C^1(X(\Omega))$ and the mapping $\tilde f:\overline\Omega\times\R\to\R$ defined by $\tilde f(x,t)=\partial_t \tilde F(x,t)$ satisfies a growth condition of the type \eqref{gc}. Besides, by \eqref{crip}, we have for all $v\in X(\Omega)$
\begin{align*}
\tilde\Phi(v) &= \frac{1}{2}\big(\|u_0+v\|_X^2-\|u_0\|_X^2\big)-\int_\Omega\big(F(x,u_0+v)-F(x,u_0)\big)\, dx \\
&= \Phi(u_0+v)-\Phi(u_0),
\end{align*}
in particular $\tilde\Phi(0)=0$. Our hypothesis thus rephrases as
\[\inf_{v\in X(\Omega)\cap\overline B_\rho^\delta(0)}\tilde\Phi(v)=0\]
and by the previous cases, we can find $\eps>0$ such that for all $v\in X(\Omega)$, $\|v\|_X\le\eps$, we have $\tilde\Phi(v)\ge 0$, namely $\Phi(u_0+v)\ge\Phi(u_0)$. 
\vskip4pt
\noindent
{\bf Proof that $(ii)$ implies $(i)$.}
Suppose by contradiction that there exists a sequence $(u_n)$ which converges to $u$ in $C^0_\delta(\overline{\Omega})$ and
$\Phi(u_n)<\Phi(u_0)$. Observe that 
$$
\int_{\Omega} F(x,u_n)\, dx \to \int_{\Omega} F(x,u)\, dx,
$$
and this, together with $\Phi(u_n)<\Phi(u_0)$, implies that
\begin{equation}
\label{sup-lim}
\limsup_n \|u_n\|_{X}^2\leq \|u\|_X^2.
\end{equation}
In particular $(u_n)$ is bounded in $X(\Omega)$ and, up to a subsequence, it converges weakly 
and pointwisely to $u_0$. By semicontinuity, \eqref{sup-lim} forces $\|u_n\|_X\to \|u_0\|_X$, thus $u_n\to u_0$
in $X$ as $n\to\infty$, which concludes the proof.
\qed

\section {Applications}\label{s4}

\noindent
In this section we present some existence and multiplicity results for the solutions of problem \eqref{bvp}, under \eqref{gc} plus some further conditions. In the proofs of such results, Theorem \ref{min-local} will play an essential r\^ole.
\vskip2pt
\noindent
Our first result ensures that, if problem \eqref{bvp} admits a weak subsolution and a weak supersolution, then it admits a solution which is also a local minimizer of the energy functional. We define weak super- and subsolutions of \eqref{bvp} as in Section \ref{s2}.

\begin{thm}\label{subsuper}
Let $f:\Omega\times\R\to \R$ be a Carath\'eodory function satisfying \eqref{gc} and $f(x,\cdot)$ be nondecreasing in $\R$ for a.a. $x\in\Omega$. Suppose that $\overline u,\underline u\in H^s(\R^N)$ are a weak supersolution and a weak subsolution, respectively, of \eqref{bvp} which are not solutions. Then, there exists a solution $u_0\in X(\Omega)$ of \eqref{bvp} such that $\underline u\le u_0\le \overline u$ a.e. in $\Omega$ and $u_0$ is a local minimizer of $\Phi$ on $X(\Omega)$.
\end{thm}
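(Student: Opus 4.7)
The plan is the classical truncation--minimization scheme combined with Theorem \ref{min-local}. I would truncate the nonlinearity against the order interval $[\underline u,\overline u]$, produce a global minimizer $u_0$ of the resulting functional, use the monotonicity of $f$ to trap $u_0$ inside $[\underline u,\overline u]$ so that it solves \eqref{bvp}, invoke the fractional Hopf lemma (Lemma \ref{hl}) together with the hypothesis that $\overline u,\underline u$ are not solutions to place $u_0$ strictly inside this interval in the $C^0_\delta$-sense, and finally apply Theorem \ref{min-local} to upgrade $C^0_\delta$-local minimality to $X(\Omega)$-local minimality.

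Concretely, set $T(x,t):=\max\{\underline u(x),\min\{t,\overline u(x)\}\}$, $\hat f(x,t):=f(x,T(x,t))$, and let $\hat F,\hat\Phi$ be the corresponding primitive and energy on $X(\Omega)$. From \eqref{gc} together with $\underline u,\overline u\in H^s(\R^N)\subset L^{2^*_s}(\R^N)$ one gets the pointwise bound $|\hat f(x,t)|\le a(1+M(x)^{q-1})$ with $M:=\max\{|\underline u|,|\overline u|\}\in L^{2^*_s}(\Omega)$, so the nonlinear term in $\hat\Phi$ is at most linear in $\|u\|_X$ (yielding coercivity) and, via Vitali's theorem applied to the pointwise a.e.\ limit produced by the Rellich embedding, sequentially weakly continuous on $X(\Omega)$. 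Hence $\hat\Phi$ attains its infimum at some $u_0\in X(\Omega)$.

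To show $\underline u\le u_0\le\overline u$, I would test the Euler--Lagrange equation for $u_0$ and the weak supersolution inequality for $\overline u$ both with $(u_0-\overline u)_+\in X(\Omega)_+$ (which vanishes outside $\Omega$ since $u_0=0$ and $\overline u\ge 0$ there) and subtract to obtain
\[
\langle u_0-\overline u,(u_0-\overline u)_+\rangle_X\le\int_\Omega\big(\hat f(x,u_0)-f(x,\overline u)\big)(u_0-\overline u)_+\,dx=0,
\]
the right-hand side vanishing because $\hat f(x,u_0)=f(x,\overline u)$ on $\{u_0>\overline u\}$ by construction of $T$. The pointwise inequality $(a-b)(a_+-b_+)\ge(a_+-b_+)^2$ bounds the left-hand side below by $\|(u_0-\overline u)_+\|_X^2$, forcing $u_0\le\overline u$ a.e.; the symmetric test $(\underline u-u_0)_+$ gives $u_0\ge\underline u$. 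Consequently $\hat f(\cdot,u_0)=f(\cdot,u_0)$ and $u_0$ is a weak solution of \eqref{bvp}.

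The main obstacle is the $C^0_\delta$-strict interiority. The difference $w:=\overline u-u_0\in H^s(\R^N)$ is not identically zero (otherwise $\overline u=u_0$ would be a solution, contradicting the hypothesis), satisfies $w\ge 0$ on $\R^N\setminus\Omega$, and by the monotonicity of $f$ together with $\overline u\ge u_0$ it is a weak supersolution of \eqref{dirichlet} with $f=0$. Lemma \ref{hl} then yields $\overline u-u_0\ge C\delta^s$ in $\overline\Omega$, and the symmetric argument gives $u_0-\underline u\ge C'\delta^s$; Theorems \ref{linfty} and \ref{ros} further place $u_0\in C^0_\delta(\overline\Omega)$. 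Setting $\rho:=\tfrac{1}{2}\min\{C,C'\}$, any $v\in X(\Omega)\cap C^0_\delta(\overline\Omega)$ with $\|v\|_{0,\delta}\le\rho$ satisfies $|v|\le\rho\delta^s$ pointwise, so $\underline u\le u_0+v\le\overline u$ a.e.\ in $\Omega$; on this order interval $\hat f\equiv f$, which gives $\Phi(u_0+v)-\Phi(u_0)=\hat\Phi(u_0+v)-\hat\Phi(u_0)\ge 0$. Thus $u_0$ is a $C^0_\delta$-local minimizer of $\Phi$, and Theorem \ref{min-local} promotes it to an $X(\Omega)$-local minimizer, completing the proof.
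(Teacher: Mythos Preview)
Your argument is correct and follows essentially the same route as the paper: truncate against $[\underline u,\overline u]$, minimize the truncated energy, use monotonicity of $f$ and the maximum/Hopf principles to get $u_0$ strictly inside the order interval in the $C^0_\delta$-sense, and then apply Theorem \ref{min-local}. The only omission is that you tacitly use $\underline u\le\overline u$ a.e.\ (your claim ``$\hat f(x,u_0)=f(x,\overline u)$ on $\{u_0>\overline u\}$'' fails at points where $\underline u>\overline u$); the paper establishes this at the outset by noting that $\overline u-\underline u$ is a weak supersolution of \eqref{dirichlet} with $f=g=0$ and invoking Theorem \ref{wmp}. A minor stylistic difference is that you obtain $\underline u\le u_0\le\overline u$ by direct testing with $(u_0-\overline u)_+$ and $(\underline u-u_0)_+$, whereas the paper shows directly that $\overline u-u_0$ (resp.\ $u_0-\underline u$) is a weak supersolution of \eqref{dirichlet} with $f=0$ and applies Theorem \ref{wmp}; both are valid and equivalent.
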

\begin{proof}
We first observe that $\underline u\le\overline u$ a.e. in $\R^N$. Indeed, by monotonicity of $f(x,\cdot)$, $\overline u-\underline u$ is easily seen to be a weak supersolution of \eqref{dirichlet} with $f=g=0$ and Theorem \ref{wmp} forces $\overline u-
\underline u\ge 0$. We set for all $(x,t)\in\Omega\times\R$
\[
\tilde f(x,t):=\begin{cases}
f(x,\underline u(x)) & \text{if $t\le\underline u(x)$} \\
f(x,t) & \text{if $\underline u(x)<t<\overline u(x)$} \\
f(x,\overline u(x)) & \text{if $t\ge\overline u(x)$}
\end{cases}
\quad \tilde F(x,t):=\int_0^t \tilde f(x,\tau)\,d\tau\]
and for all $u\in X(\Omega)$
\[
\tilde\Phi(u):=\frac{\|u\|_X^2}{2}-\int_\Omega \tilde F(x,u)\, dx.
\]
The functional $\tilde\Phi\in C^1(X(\Omega))$ is sequentially weakly lower semicontinuous and coercive, since monotonicity of $f(x,\cdot)$, \eqref{gc} and H\"older inequality imply for all $u\in X(\Omega)$
\[
\int_\Omega \tilde F(x,u)\, dx \le \int_\Omega \big(|f(x,\underline u)|+|f(x,\overline u)|\big)|u|\, dx \le C(1+\|\underline u\|_q^{q-1}+\|\overline u\|_q^{q-1})\|u\|_X.
\]
Let $u_0\in X(\Omega)$ be a global minimizer of $\tilde\Phi$, which therefore solves
\[\begin{cases}
(- \Delta)^s\, u_0 =\tilde f(x,u_0) & \text{in } \Omega \\
u_0= 0 & \text{in } \R^N \setminus \Omega.
\end{cases}\]
Again by monotonicity and the definition of $\tilde f$, we have, in the weak sense,
\[(-\Delta)^s(\overline u-u_0)\ge f(x,\overline u)-\tilde f(x,u_0)\ge 0\]
in $\Omega$, while $\overline u-u_0\ge 0$ in $\R^N\setminus\Omega$, so $\overline u-u_0$ is a weak supersolution of \eqref{dirichlet}, nonnegative by Theorem \ref{wmp}. It holds $\overline u-u_0\neq 0$, otherwise we would have $\overline u\in X(\Omega)$ and, in the weak sense,
\[(-\Delta)^s\overline u=\tilde f(x,\overline u)=f(x,\overline u)\]
in $\Omega$, against our hypotheses on $\overline u$. By Lemma \ref{hl}, we have $(\overline u-u_0)/\delta^s\ge C$ in $\overline\Omega$ for some $C>0$. Similarly we prove that $(u_0-\underline u)/\delta^s\ge C$ in $\overline\Omega$. Thus, $u_0$ is a solution of \eqref{bvp}.
\vskip2pt
\noindent
Now we prove that $u_0$ is a local minimizer of $\Phi$. By Theorems \ref{linfty} and \ref{ros} we have $u_0\in C^0_\delta(\overline\Omega)$. For any $u\in \overline B_{C/2}^\delta(u_0)$ we have in $\overline\Omega$
\[\frac{\overline u-u}{\delta^s}=\frac{\overline u-u_0}{\delta^s}+\frac{u_0-u}{\delta^s}\ge C-\frac{C}{2},\]
in particular $\overline u-u>0$ in $\Omega$. Similarly, $u-\underline u>0$ in $\Omega$, so $\tilde\Phi$ agrees with $\Phi$ in $\overline B_{C/2}^\delta(u_0)\cap X(\Omega)$ and $u_0$ turns out to be a local minimizer of $\Phi$ in $C^0_\delta(\overline\Omega)\cap X(\Omega)$. Now, Theorem \ref{min-local} implies that $u_0$ is a local minimizer of $\Phi$ in $X(\Omega)$ as well.
\end{proof}

\noindent
We present now a multiplicity theorem for problem \eqref{bvp}, whose proof combines Theorem \ref{min-local}, spectral properties of $(-\Delta)^s$ and Morse-theoretical methods (the fully nonlinear case is examined in \cite[Theorem 5.3]{ILPS}). In what follows, $0<\lambda_{1,s}<\lambda_{2,s}\le\ldots$ will denote the eigenvalues of $(-\Delta)^s$ in $X(\Omega)$ (see \cite{SV1}).

\begin{thm}\label{app1}
Let $f:\Omega\times\R\to\R$ be a Carath\'eodory function satisfying
\begin{itemize}
\item[$(i)$] $|f(x,t)|\le a(1+|t|^{q-1})$ a.e. in $\Omega$ and for all $t\in\R$ ($a>0$, $1<q<2^*_s$);
\item[$(ii)$] $f(x,t)t\ge 0$ a.e. in $\Omega$ and for all $t\in\R$;
\item[$(iii)$] $\displaystyle\lim_{t\to 0}\frac{f(x,t)-b|t|^{r-2}t}{t}=0$ uniformly a.e. in $\Omega$ ($b>0$, $1<r<2$);
\item[$(iv)$] $\displaystyle\limsup_{|t|\to\infty}\frac{2F(x,t)}{t^2}<\lambda_{1,s}$ uniformly a.e. in $\Omega$.
\end{itemize}
Then problem \eqref{bvp} admits at least three non-zero solutions.
\end{thm}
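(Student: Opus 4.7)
The plan is to produce one positive and one negative $X(\Omega)$-local minimizer of $\Phi$ via a truncation-minimization scheme, then to obtain a third nonzero critical point from the mountain pass theorem together with a critical-group computation at the origin that rules out the trivial solution. The scheme parallels \cite[Theorem 5.3]{ILPS}, with the Sobolev, maximum-principle, regularity and minimization tools of Sections~\ref{s2}--\ref{s3} replacing the corresponding quasilinear ones.

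First I would set $f_\pm(x,t):=f(x,\pm t_\pm)$, $F_\pm(x,t):=\int_0^t f_\pm(x,\tau)\,d\tau$, and the truncated energies $\Phi_\pm(u):=\frac{1}{2}\|u\|_X^2-\int_\Omega F_\pm(x,u)\,dx$. Condition $(iv)$ gives $\mu<\lambda_{1,s}$ and $M>0$ with $2F(x,t)\le\mu\,t^2+M$, so the variational characterization of $\lambda_{1,s}$ makes $\Phi$ and both $\Phi_\pm$ coercive on $X(\Omega)$; sequential weak lower semicontinuity follows from $(i)$, $q<2^*_s$ and the compact embedding $X(\Omega)\hookrightarrow L^q(\Omega)$. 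Each $\Phi_\pm$ therefore attains its infimum at some $u_\pm\in X(\Omega)$. Computing $\Phi_\pm(\pm t\varphi_{1,s})$ for small $t>0$ and using $(iii)$ with $r<2$ yields $\Phi_\pm(u_\pm)<0$, hence $u_\pm\neq 0$. Testing the Euler-Lagrange equation of $u_+$ against $-(u_+)_-$ and applying the pointwise inequality $(a-b)(a_--b_-)\le -(a_--b_-)^2$ together with $f_+(x,u_+)=f(x,0)=0$ on $\{u_+<0\}$ (note that $f(x,0)=0$ follows from $(iii)$) forces $(u_+)_-=0$, so $u_+$ is a nonnegative weak solution of \eqref{bvp}. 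Then $(ii)$ and Theorem~\ref{smp} give $u_+>0$ in $\Omega$, and Lemma~\ref{hl} yields $u_+/\delta^s\ge c>0$ on $\overline\Omega$ for some $c>0$. For every $u\in X(\Omega)$ with $\|u-u_+\|_{0,\delta}\le c/2$ we have $u/\delta^s\ge c/2$, hence $u>0$ in $\Omega$ and $\Phi(u)=\Phi_+(u)\ge\Phi_+(u_+)=\Phi(u_+)$, so $u_+$ is a local minimizer of $\Phi$ in $C^0_\delta(\overline\Omega)\cap X(\Omega)$. Theorem~\ref{min-local} upgrades it to an $X(\Omega)$-local minimizer; the symmetric argument provides a negative $X(\Omega)$-local minimizer $u_-<0$.

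Coercivity of $\Phi$ and the compact subcritical embedding ensure the Palais-Smale condition, so with two distinct local minimizers $u_\pm$ at hand the mountain pass theorem for functionals with two local minima (in the form of Pucci-Serrin or Ghoussoub-Preiss) produces a third critical point $u_0\ne u_\pm$ with $C_1(\Phi,u_0)\ne 0$. It remains to exclude $u_0=0$; since $f(x,0)=0$ the origin is itself a critical point, so I would rule out this case by computing the critical groups of $\Phi$ at $0$. Hypothesis $(iii)$ with $1<r<2$ makes $\Phi$ "concave" at the origin in the sense that $\Phi(tu)<0$ for small $t>0$ along every ray in $X(\Omega)$; a standard pseudogradient deformation then sweeps a small punctured neighborhood of $0$ into the sublevel set $\{\Phi<0\}$ and contracts it radially, giving $C_k(\Phi,0)=0$ for every $k\ge 0$. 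Since $C_1(\Phi,u_0)\ne 0=C_1(\Phi,0)$, one concludes $u_0\ne 0$, yielding the third nonzero solution. I expect this last critical-group computation to be the main technical obstacle, as one must control the remainder $o(t)$ in $(iii)$ uniformly along the negative pseudogradient flow on the Hilbert space $X(\Omega)$; however, modulo the adjustments needed to work in the fractional setting, this is essentially a transcription of the corresponding step in \cite[Theorem 5.3]{ILPS}.
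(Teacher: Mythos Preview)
Your proposal is correct and follows essentially the same route as the paper's proof: truncate to $\Phi_\pm$, minimize (using $(iv)$ for coercivity and $(iii)$ to see the infima are negative), use the sign condition $(ii)$ together with the strong maximum principle and Hopf's Lemma~\ref{hl} to land the minimizers $u_\pm$ in the $C^0_\delta$-interior of the positive/negative cones, invoke Theorem~\ref{min-local} to make them $X(\Omega)$-local minimizers, and then appeal to the Morse-theoretic scheme of \cite[Theorem~5.3]{ILPS} (mountain-pass critical point with $C_1\neq 0$ versus $C_k(\Phi,0)=0$) for the third nonzero solution. The only cosmetic omissions are that you should record $u_+\in C^0_\delta(\overline\Omega)$ via Theorems~\ref{linfty} and~\ref{ros} before speaking of $\|u-u_+\|_{0,\delta}$, and that the nontriviality of $C_1$ at the mountain-pass point comes from Hofer/Liu-type results rather than from Pucci--Serrin or Ghoussoub--Preiss themselves.
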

\begin{proof}
We define $\Phi\in C^1(X(\Omega))$ as in the Introduction. From $(ii),(iii)$ we immediately see that $0$ is a critical point of $\Phi$, which is not a local minimizer by \cite[Lemma 5.5]{ILPS}. We introduce two truncated energy functionals, setting for all $(x,t)\in\Omega\times\R$
\[f_\pm(x,t)=f(x,\pm t_\pm),\quad F_\pm(x,t)=\int_0^t f_\pm(x,\tau)\,d\tau\]
and for all $u\in X(\Omega)$
\[
\Phi_\pm(u)=\frac{\|u\|_X^2}{2}-\int_\Omega F_\pm(x,u)\, dx.
\]
Clearly $f_+$ satisfies \eqref{gc}. It can be easily seen (see \cite[Lemma 5.5]{ILPS}) that there exists $u^+\in X(\Omega)\setminus\{0\}$ such that
\[
\Phi_+(u^+)=\inf_{u\in X(\Omega)}\Phi_+(u).
\]
Then, taking into account Theorem~\ref{wmp} and $(ii)$, $u^+$ is a nonnegative weak solution to~\eqref{bvp}. By Theorem \ref{linfty}, we have $u^+\in L^\infty(\Omega)$, so by Theorem \ref{ros} we deduce $u^+\in C^0_\delta(\overline\Omega)$. Moreover, again by $(ii)$, $u^+$ is a weak supersolution of problem \eqref{dirichlet} with $f=g=0$, hence by Lemma \ref{hl} $u^+/\delta^s>0$ in $\overline\Omega$. Now \cite[Lemma 5.1]{ILPS} implies that $u^+\in{\rm int}(C_+)$, where
\[C_+=\{u\in C^0_\delta(\overline\Omega):\, u(x)\ge 0 \ \mbox{in $\overline\Omega$}\}\]
and the interior is defined with respect to the $C^0_\delta(\overline\Omega)$-topology. Let $\rho>0$ be such that $B^\delta_\rho(u^+)\subset C_+$. Since $\Phi$ and $\Phi_+$ agree on $C_+\cap X(\Omega)$, 
\[
\Phi(u^++v)\ge\Phi(u^+),\qquad v\in B^\delta_\rho(0)\cap X(\Omega)
\]
and by Theorem \ref{min-local}, $u^+$ is a strictly positive local minimizer for $\Phi$ in $X(\Omega)$. Similarly, looking at $\Phi_-$, we can detect another strictly negative local minimizer $u^-\in -{\rm int}(C_+)$ of $\Phi$. Now, a Morse-theoretic argument shows that there exists a further critical point $\tilde u\in X(\Omega)$ of $\Phi$ with $u\notin\{0,u^\pm\}$ (see the proof of \cite[Theorem 5.3]{ILPS}).
\end{proof}

\noindent
We conclude this section with a fractional version of a classical multiplicity result for semilinear problems based on Morse theory:

\begin{thm}\label{app2}
Let $f\in C^1(\R)$ satisfy
\begin{itemize}
\item[$(i)$] $|f'(t)|\le a(1+|t|^{q-2})$ for all $t\in\R$ ($a>0$, $1<q\le 2^*_s$);
\item[$(ii)$] $f(t)t\ge 0$ and for all $t\in\R$;
\item[$(iii)$] $f'(0)>\lambda_{2,s}$ and $f'(0)$ is not an eigenvalue of $(-\Delta)^s$ in $X(\Omega)$;
\item[$(iv)$] $\displaystyle\limsup_{|t|\to\infty}\frac{f(t)}{t}<\lambda_{1,s}$.
\end{itemize}
Then problem \eqref{bvp} admits at least four non-zero solutions.
\end{thm}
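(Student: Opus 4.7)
The plan is to augment the strategy of Theorem \ref{app1} with a Morse-theoretic exploitation of assumption $(iii)$ so as to produce a fourth nonzero solution. First, from $(iv)$ one extracts constants $L<\lambda_{1,s}$ and $C>0$ such that $|f(t)|\le L|t|+C$ for all $t\in\R$, so the nonlinearity has \emph{global} sub-linear growth, despite what $(i)$ tolerates when $q=2^*_s$. Consequently $\Phi\in C^2(X(\Omega))$ is coercive and satisfies the Palais--Smale condition: boundedness of any $(PS)$-sequence follows from coercivity, and strong convergence follows from the compact embedding $X(\Omega)\hookrightarrow L^2(\Omega)$ combined with dominated convergence applied to the effectively linear nonlinearity. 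Moreover $f\in C^1$ combined with the growth bound yields the $C^2$-regularity of $\Phi$ needed for Morse machinery.

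Next, exactly as in the proof of Theorem \ref{app1}, I would construct two sign-definite local minimizers $u^+\in\mathrm{int}(C_+)$ and $u^-\in-\mathrm{int}(C_+)$ of $\Phi$, by minimizing globally the coercive truncated functionals $\Phi_\pm$, fixing signs via Theorem \ref{wmp}, obtaining strict positivity/negativity inside $\Omega$ via Theorem \ref{smp}, controlling the boundary behaviour through Lemma \ref{hl}, landing in $C^0_\delta(\overline\Omega)$ by Theorems \ref{linfty} and \ref{ros}, and finally invoking Theorem \ref{min-local} to promote these to $X(\Omega)$-local minimizers. Non-triviality $u^\pm\ne 0$ is immediate from $(iii)$: since $f'(0)>\lambda_{1,s}$, one has $\Phi_\pm(tu_1)<0$ for small $t>0$ along the positive first eigenfunction $u_1$ of $(-\Delta)^s$. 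The Pucci--Serrin mountain pass theorem, applied to these two distinct strict local minima of the coercive $(PS)$-functional $\Phi$, then produces a third critical point $\tilde u\notin\{u^+,u^-\}$ with $C_1(\Phi,\tilde u)\ne 0$.

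A fourth nonzero critical point is then extracted via Morse theory. Assumption $(iii)$ guarantees that $\Phi''(0)$ is a compact perturbation of the identity, invertible because $f'(0)$ is not an eigenvalue of $(-\Delta)^s$, and with negative eigenspace of dimension $m_0\ge 2$ because $f'(0)>\lambda_{2,s}$. Hence $0$ is a nondegenerate critical point of $\Phi$ with $C_k(\Phi,0)=\delta_{k,m_0}\Z$; in particular $\tilde u\ne 0$ (since $C_1(\Phi,0)=0$) and $0\notin\{u^+,u^-\}$ (since $C_0(\Phi,0)=0$). Coercivity of $\Phi$ gives $C_k(\Phi,\infty)=\delta_{k,0}\Z$, and the Morse--Poincar\'e identity on the hypothetical critical set $K=\{0,u^+,u^-,\tilde u\}$ is incompatible with the high Morse index $m_0\ge 2$ at $0$: in the nondegenerate case, $1+1-1+(-1)^{m_0}=1$ would force $(-1)^{m_0}=0$, which is impossible, and the general case is handled by the same Poincar\'e-polynomial argument as in the concluding step of the proof of Theorem \ref{app1}. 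Hence a fifth critical point must exist, producing four nonzero solutions of \eqref{bvp}.

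The step I expect to be the main obstacle is the $(PS)$ verification in the critical case $q=2^*_s$: although $(iv)$ makes $f$ effectively linear at infinity, one has to check carefully that the polynomially-growing intermediate regime allowed by $(i)$ does not trigger critical-exponent concentration, which calls for a Brezis--Lieb type estimate rather than naive dominated convergence. A secondary subtlety is the exact bookkeeping of the Poincar\'e polynomials in the Morse step when the mountain pass critical point $\tilde u$ is degenerate, but this is handled by the standard observation that at a strict mountain pass one has $C_0(\Phi,\tilde u)=0$, which together with $C_1(\Phi,\tilde u)\ne 0$ is enough to close the contradiction.
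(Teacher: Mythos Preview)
Your overall architecture coincides with the paper's: establish $\Phi\in C^2(X(\Omega))$ and coercivity from $(i)$ and $(iv)$, produce the two constant-sign local minimizers $u^\pm$ via the truncation argument of Theorem~\ref{app1} and Theorem~\ref{min-local}, and then invoke Morse theory to find two further nonzero critical points. Your concern about the Palais--Smale condition in the case $q=2^*_s$ is misplaced: hypothesis $(iv)$ forces the global linear bound $|f(t)|\le L|t|+C$ with $L<\lambda_{1,s}$, so the Nemytskii term is controlled by the compact embedding $X(\Omega)\hookrightarrow L^2(\Omega)$ and no concentration phenomenon can occur, regardless of what $(i)$ allows on bounded $t$-intervals.

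The genuine gap is in your Morse-theoretic step. You assert that knowing $C_0(\Phi,\tilde u)=0$ and $C_1(\Phi,\tilde u)\ne 0$ for the mountain-pass point ``is enough to close the contradiction'' via the Poincar\'e polynomial identity. It is not. If, say, $m_0=3$ and $\tilde u$ happened to have critical polynomial $P_{\tilde u}(t)=t+t^2$ (which is compatible with $C_0=0$, $C_1\ne 0$), then the Morse relation
\[
1+1+(t+t^2)+t^3 \;=\; 1+(1+t)(1+t^2)
\]
holds with $Q(t)=1+t^2\ge 0$, and no contradiction arises. The shifting theorem shows that such a polynomial can only occur when the Morse index of $\tilde u$ is zero and its nullity is at least two; the case of Morse index one automatically yields $C_k(\Phi,\tilde u)=\delta_{k,1}\mathbb Z$. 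What the paper invokes to exclude the bad case is precisely the \emph{Hess--Kato theorem}: for the linearized operator $(-\Delta)^s-f'(\tilde u)$ the principal eigenvalue is simple, so if the Morse index vanishes and $\tilde u$ is degenerate then the nullity equals one, which again forces $C_k(\Phi,\tilde u)=\delta_{k,1}\mathbb Z$. With this in hand the Morse relation becomes $1+t+t^{m_0}=(1+t)Q(t)$, which is impossible since $(1+t)\nmid t^{m_0}$. Your appeal to ``the same Poincar\'e-polynomial argument as in the concluding step of the proof of Theorem~\ref{app1}'' does not help: in Theorem~\ref{app1} only \emph{one} additional critical point beyond $u^\pm$ is needed, and the critical groups at $0$ are entirely different there because of the sublinear behaviour in hypothesis~$(iii)$ of that theorem. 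Here you need two additional nonzero critical points, and the Hess--Kato ingredient (as in Liu--Liu~\cite{LL1}, which the paper cites) is exactly what is missing from your outline.
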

\begin{proof}
Due to $(i)$, we have $\Phi\in C^2(X(\Omega))$, and by $(iv)$ $\Phi$ is coercive. By $(iii)$, we know that $0$ is a nondegenerate critical point of $\Phi$ with Morse index $m\ge 2$ (see Li, Perera \& Su \cite[Proposition 1.1]{LPS}). Therefore, reasoning as in the proof of Theorem \ref{app1}, we find two local minimizers $u^\pm\in\pm C_+$ for $\Phi$, with $u^+>0$ and $u^-<0$ in $\Omega$. Now, the Hess-Kato Theorem and a Morse-theoretic argument provide two further critical points $u_0,u_1\in X(\Omega)\setminus\{0,u^\pm\}$ (as in Liu \& Liu \cite[Theorem 1.3]{LL1}).
\end{proof}

\section{Final comments and open questions}\label{sec6}

\noindent
Let $p \in (1,\infty)$ and $s \in (0,1)$.
Recently, in \cite{ILPS}, quasi-linear problems involving the {\em fractional $p$-Laplacian operator} were investigated via techniques of Morse theory applied to the functional
$$
\Phi(u)=\frac{1}{p}\int_{\R^{2N}}\frac{|u(x)-u(y)|^p}{|x-y|^{N+ps}}\, dx\, dy-\int_\Omega F(x,u)\, dx,
$$
over the space of functions $u\in W^{s,p}(\R^N)$ with $u=0$ outside $\Omega$. Critical points of $\Phi$ give rise to nonlinear equations whose leading term is the {\em fractional $p$-Laplacian}, namely (up to a multiplicative constant)
\[
(-\Delta)^s_p u(x)= \lim_{\eps\to 0^+}\int_{\R^N\setminus B_\eps(x)}\frac{|u(x)-u(y)|^{p-2}(u(x)-u(y))}{|x-y|^{N+ps}}\, dy.
\]
Recent contributions on the subject of the fractional $p$-Laplacian operator are also contained in \cite{C,DKP,FP,LL}. 
\vskip2pt
\noindent
A natural question is whether a counterpart of Theorem \ref{min-local} holds in this nonlinear setting. This would provide a nonlocal version of the results of Garc\`{\i}a Azorero, Peral Alonso \& Manfredi \cite{GPM}, which extend the Brezis-Nirenberg theorem on local minimizers to nonlinear operators of the $p$-Laplacian type. Notice that the Moser iteration used in the proof of Theorem \ref{linfty} seems flexible enough to carry over in the nonlinear case (with \cite[Lemma C.2]{BLP} replacing Lemma \ref{ineq}). Hence, the main difficulty seems to be the proof of a boundary regularity estimate for the boundedly inhomogeneous fractional $p$-Laplacian equation as the one of Theorem \ref{ros}.
\vskip2pt
\noindent
Another point of interest lies in the fractional Hopf Lemma. As seen in Section \ref{s4}, the main point in focusing to $C^0_\delta(\overline\Omega)$ local minimizers is the fact that many order-related subsets of $X(\Omega)$ turn out to have nonempty interior with respect to the $C^0_\delta(\overline\Omega)$-topology. As mentioned in the Introduction, this is in strong contrast with the features of the topology of $X(\Omega)$, and the main tool to exploit this difference is Lemma \ref{hl}. It would be therefore interesting to explore the validity of such a statement for more general nonlocal operators, and for the fractional $p$-Laplacian in particular. 
\vskip2pt
\noindent
Finally, it is worth noting that in \cite{BN}, the sub-supersolution principle analogous to Theorem \ref{subsuper} is proved under a more general hypothesis on the nonlinearity $f(x, t)$, namely  
\[
\text{There exists $k\geq 0$ such that for a.e. $x\in \Omega$ the map $t\mapsto f(x, t)+kt$ is non-decreasing}.
\]
While we considered in Theorem \ref{subsuper} only non-decreasing nonlinearities, it seems that with little effort one can obtain the tools needed to treat the latter, more general, case. Indeed, it suffices to prove, for the operator $(-\Delta)^su+ku$,  $k\geq 0$, all the corresponding results of Section \ref{s2}.

\bigskip
\bigskip

\bigskip


\begin{thebibliography}{99}

\bibitem{AKSZ}
{\sc H.\ Aikawa, T.\ Kiplel\"{a}inen, N.\ Shanmugalingam, X.\ Zhong},
Boundary Harnack principle for $p$-harmonic functions in smooth euclidean spaces,
{\em Potential Anal.} {\bf 26} (2007) 281--301.

\bibitem{BCSS}
{\sc B.\ Barrios, E.\ Colorado, R.\ Servadei, F.\ Soria},
A critical fractional equation with concave-convex power nonlinearities,
{\em Ann. Inst. H. Poincar\'e Anal. Non Lin\'eaire}, to appear.

\bibitem{BF}
{\sc L.\ Brasco, G.\ Franzina}, 
Convexity properties of Dirichlet integrals and Picone-type inequalities,
preprint.

\bibitem{BLP}
{\sc L.\ Brasco, E.\ Lindgren, E.\ Parini},
The fractional Cheeger problem,
preprint.

\bibitem{BN}
{\sc H. Brezis, L.\ Nirenberg},
$H^1$ versus $C^1$ minimizers,
{\em C.\ R.\ Acad.\ Sci. Paris} {\bf 317} (1993) 465--472.

\bibitem{CS1}
{\sc X.\ Cabr\'e, Y.\ Sire},
Nonlinear equations for fractional Laplacians I: Regularity, maximum principles, and Hamiltonian estimates,
{\em Ann. Inst. Henri Poincar\'e (C) Nonlinear Analysis} {\bf 31} (2014) 23--53.

\bibitem{CS2}
{\sc X.\ Cabr\'e, Y.\ Sire},
Nonlinear equations for fractional Laplacians II: Existence, uniqueness, and qualitative properties of solutions,
{\em Trans. Amer. Math. Soc.}, to appear.

\bibitem{C}
{\sc L.A.\ Caffarelli},
Nonlocal equations, drifts and games, Nonlinear Partial Differential Equations,
Abel Symposia {\bf 7} (2012) 37--52.


\bibitem{CS}
{\sc L.\ Caffarelli, L.\ Silvestre},
An extension problem related to the fractional Laplacian, 
{\em Comm. Partial Differential Equations} {\bf 32} (2007) 1245--1260.

\bibitem{CRS}
{\sc L.\ Caffarelli, J-M.\ Roquejoffre, Y.\ Sire},
Variational problems with free boundaries for the fractional Laplacian,
{\em J.\ Eur.\ Math.\ Soc.} {\bf 12} (2010), 1151--1179.

\bibitem{CLO}
{\sc W.\ Chen, C.\ Li,  B.\ Ou},  
Classification of solutions for an integral equation,
{\em Comm.\ Pure Appl.\ Math.} {\bf 59} (2006), 330--343.

\bibitem{DKP}
{\sc A.\ Di Castro, T.\ Kuusi, G.\ Palatucci},
Local behavior of fractional $p$-minimizers,
preprint.

\bibitem{DPV}
{\sc E.\ Di Nezza, G.\ Palatucci, E.\ Valdinoci},
Hitchhiker's guide to the fractional Sobolev spaces,
{\em Bull. Sci. Math.} \textbf{136} (2012) 521--573.

\bibitem{FSV}
{\sc A.\ Fiscella, R.\ Servadei, E.\ Valdinoci},
Density properties for fractional Sobolev spaces,
preprint.

\bibitem{FP}
\textsc{G.\ Franzina, G.\ Palatucci},
Fractional $p$-eigenvalues,
{\em Riv. Mat. Univ. Parma}, to appear.

\bibitem{GPM}
{\sc J.P.\ Garc\`{\i}a Azorero, I.\ Peral Alonso, J.J.\ Manfredi},
Sobolev versus H\"older local minimizers and global multiplicity for some quasilinear elliptic equations,
{\em Commun. Contemp. Math.} {\bf 2} (2000) 385--404.

\bibitem{ILPS}
\textsc{A.\ Iannizzotto, S.\ Liu, K.\ Perera, M.\ Squassina},  
Existence results for fractional p-Laplacian problems via Morse theory,
preprint.


\bibitem{LPS}
{\sc S.\ Li, K.\ Perera, J.\ Su},
Computation of critical groups in elliptic boundary-value problems where the asymptotic limits may not exist,
{\em Proc. Royal Soc. Edinburgh Sec. A}, {\bf 131} (2001) 721--732.

\bibitem{LL}
\textsc{E.\ Lindgren, P.\ Lindqvist},
Fractional eigenvalues,
{\em Calc. Var. PDE} {\bf 49} (2014) 795--826.

\bibitem{LL1}
{\sc J.\ Liu, S.\ Liu},
The existence of multiple solutions to quasilinear elliptic equations,
{\em Bull. London Math. Soc.} {\bf 37} (2005) 592--600.

\bibitem{M}
{\sc G.\ Mingione},
Gradient potential estimates,
{\em J. Eur. Math. Soc.} {\bf 13} (2011) 459--486.



\bibitem{RS}
{\sc X.\ Ros-Oton, J.\ Serra},
The Dirichlet problem for the fractional Laplacian: regularity up to the boundary,
{\em J. Math. Pures Appl.}, to appear.

\bibitem{RS1}
{\sc X.\ Ros-Oton, J.\ Serra},
The Poho\v zaev identity for the fractional laplacian.
{\em Arch. Rat. Mech. Anal.}, to appear.

\bibitem{RS2}
{\sc X.\ Ros-Oton, J.\ Serra},
Nonexistence results for nonlocal equations with critical and supercritical nonlinearities,
preprint.

\bibitem{SV}
{\sc R.\ Servadei, E.\ Valdinoci},
Mountain pass solutions for non-local elliptic operators,
{\em J. Math. Anal. Appl.} {\bf 389} (2012) 887--898.

\bibitem{SV1}
{\sc R.\ Servadei, E.\ Valdinoci},
Variational methods for non-local operators of elliptic type,
{\em Discrete Contin. Dyn. Syst.} {\bf 33} (2013) 2105--2137.

\bibitem{SV2}
{\sc R.\ Servadei, E.\ Valdinoci},
Lewy-Stampacchia type estimates for variational inequalities driven by (non)local operators,
{\em Rev. Mat. Iberoam.} {\bf 29} (2013) 1091--1126.

\bibitem{SV3}
{\sc R.\ Servadei, E.\ Valdinoci},
The Brezis-Nirenberg result for the fractional Laplacian,
{\em Trans. Amer. Math. Soc.}, to appear.

\bibitem{SV4}
{\sc R.\ Servadei, E.\ Valdinoci},
Weak and viscosity solutions of the fractional Laplace equation,
{\em Publ. Mat.} {\bf 58} (2014) 1--261

\bibitem{S}
{\sc L.\ Silvestre},
Regularity of the obstacle problem for a fractional power of the laplace operator,
{\em Comm.\  Pure Appl.\ Math.} {\bf 60} (2007) 67--112.


\end{thebibliography}
\end{document}